\theoremstyle{definition}
\newtheorem{theorem}{Theorem}
\newtheorem{definition}[theorem]{Definition}
\newtheorem{example}[theorem]{Example}
\newtheorem{question}[theorem]{Question}
\newtheorem{lemma}[theorem]{Lemma}
\newtheorem{conjecture}[theorem]{Conjecture}
\newcommand{\ZZ}{\mathbb{Z}}
\newcommand{\NN}{\mathbb{N}}
\definecolor{cqcqcq}{rgb}{0.7529411764705882,0.7529411764705882,0.7529411764705882}
\definecolor{dtsfsf}{rgb}{0.8274509803921568,0.1843137254901961,0.1843137254901961}
\definecolor{ffwwzz}{rgb}{1,0.4,0.6}
\definecolor{ffzzcc}{rgb}{1,0.6,0.8}
\begin{document}

\title{Bounds on $(t,r)$ broadcast domination of $n$-dimensional grids}

\author{Tom Shlomi \affiliationmark{1}}
\affiliation{Harvard University} 
\keywords{combinatorics, graph theory, graph domination, broadcast domination, $(t, r)$ broadcast domination}
\received{2019-08-30}

\revised{2020-02-28, 2020-03-18, 2022-03-19}

\accepted{2022-03-31}

\publicationdetails{24}{2022}{1}{14}{5732}
 
\maketitle

\begin{abstract}
In this paper, we study a variant of graph domination known as $(t, r)$ broadcast domination, first defined in Blessing et al. (2015). In this variant, each broadcast provides $t-d$ reception to each vertex a distance $d < t$ from the broadcast. If $d \ge t$ then no reception is provided. A vertex is considered dominated if it receives $r$ total reception from all broadcasts. Our main results provide some upper and lower bounds on the density of a $(t, r)$ dominating pattern of an infinite grid, as well as methods of computing them. Also, when $r \ge 2$ we describe a family of counterexamples to a generalization of Vizing's Conjecture to $(t,r)$ broadcast domination.\end{abstract}


\section{Introduction}

Let $G=(V,E)$ be a graph with vertex set $V$ and edge set $E$.  A \emph{dominating set} $D$ of $G$ is any subset of $V$ for which every vertex in $G$ is either contained in $D$ or is adjacent to a vertex in $D$. 
The \emph{domination number}, $\gamma(G)$, is defined to be the size of the smallest possible dominating set of $G$. 

There have been over 2000 papers written on domination theory and 80 different domination related parameters defined on graphs in the past 50 years, and we refer the interested reader to the survey text written by Haynes, Hedetniemi, and Slater for a comprehensive introduction to other domination parameters \cite{haynes2013fundamentals}. In this paper, we focus primarily on $(t,r)$ broadcast domination, which was defined by Blessing, Insko, Johnson, and Mauretour in 2015 \cite{blessing2015t,farina2016new} as a generalization of domination and distance domination. Following their conventions, a \emph{$(t,r)$ broadcast domination set} is defined as follows: Consider a set $D \subseteq V$, fix a desired transmission strength $t \in \NN = \{1,2,3, ...\}$, and fix a desired reception strength $r \in \NN$, with $t \geq r$.  Define the \emph{transmission neighborhood} $N_t(v)$ of a vertex $v$ in $V$ to be the set of points a distance less than or equal to $t$ from $v$. Then define the reception $r(v)$ at each vertex $v \in V$ to be 
\begin{displaymath}
r(v) := \sum_{d \in D \cap N_t(v)}t - dis(d, v)
\end{displaymath}
where $dis(v_1, v_2)$ is the distance between two vertices of a graph. The set $D$ is a $(t, r)$ dominating set if every vertex has a reception of at least $r$, i.e. for all vertices $v$, $r(v) \geq r$.
The \emph{$(t,r)$ broadcast domination number}, $\gamma_{t,r}(G)$, is defined to be the smallest cardinality of any $(t,r)$ broadcast dominating set for $G$. Section \ref{subsec:ex} has an example where $t=3$ and $r=2$. 

Regular graph domination has numerous applications in computer science and other fields. While they are not explored in this paper, there are many potential applications for $(t,r)$ broadcast domination as well. For example, it could be useful for municipal planning. In a city, every house needs to be within a certain radius of a fire station.  However, it is possible that having two fire stations just outside that radius is acceptable, since it increases the chance that a fire truck from one of them could make it on time. This scenario could be approximately modeled by representing the city as a graph and the fire stations as $(t,r)$ broadcasts.

In their paper, Blessing, Insko, Johnson, and Mauretour established exact numbers for the $(t,r)$ broadcast domination number of small grids, as well as bounds for larger ones \cite{blessing2015t}. Drews, Harris, and Randolph expanded this work by computing $(t,r)$ broadcast domination patterns of infinite grids and finding their densities \cite{drews2019optimal}.

While most of our work is done on grids, which are Cartesian products of paths, we also explore Cartesian products in more generality. In particular, we look at a generalization of Vizing's Conjecture, perhaps the most famous conjecture in domination theory. This conjecture was posed by Vizing in 1968 and states that the domination number of a Cartesian product of graphs is at least as large as the product of their domination numbers.
\begin{conjecture} [Vizing \cite{Viz68}]
Let $G$ and $H$ be finite graphs, and let $G \Box H$ be their Cartesian product. Then
\[ \gamma (G \Box H)  \geq  \gamma (G) \gamma(H).\]
\end{conjecture}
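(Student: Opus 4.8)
The natural line of attack is a partition-and-projection argument. First I would fix a minimum dominating set $S\subseteq V(G)$ and a minimum dominating set $T=\{h_1,\dots,h_n\}\subseteq V(H)$, where $n=\gamma(H)$, together with a \emph{domination partition} $V(H)=P_1\sqcup\cdots\sqcup P_n$ in which $h_i\in P_i$ and $P_i\subseteq N[h_i]$; such a partition exists because $T$ dominates $H$. This slices the product into $n$ disjoint slabs $\Sigma_i:=V(G)\times P_i$, and any dominating set $D$ of $G\Box H$ splits accordingly as $D=\bigsqcup_{i=1}^n D_i$ with $D_i:=D\cap\Sigma_i$. Since $\gamma(G)\gamma(H)=\gamma(G)\cdot n$, it suffices to prove that the slabs pay $\gamma(G)$ each \emph{on average}, i.e.\ that $\sum_{i=1}^n |D_i|\ge n\,\gamma(G)$.

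The engine of the argument is the projection $\pi_G\colon V(G\Box H)\to V(G)$. If every projected piece $\pi_G(D_i)$ were a dominating set of $G$, the bound would be immediate, since then $|D_i|\ge|\pi_G(D_i)|\ge\gamma(G)$ for each $i$ and summing gives the claim. The obstruction is that a slab can be \emph{deficient}: a vertex $(g,h_i)$ in the central row $V(G)\times\{h_i\}$ of $\Sigma_i$ need not be dominated from within $\Sigma_i$, because in the Cartesian product it may instead be dominated by a vertical neighbor $(g,h)$ with $h\sim h_i$ that lies in an adjacent slab $\Sigma_j$. Each such cross-boundary domination event is a dominator doing double duty, serving two slabs at once, and it is exactly this sharing that prevents the naive per-slab bound from closing.

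The decisive and, I expect, genuinely hard step is the charging scheme that redistributes these cross-boundary contributions without double counting. The most one gets from the classical Clark--Suen redistribution---charge each deficient slab to a witnessing neighbor, then bound how many times any vertex of $D$ is charged---is the factor $2$, yielding only $\gamma(G\Box H)\ge\tfrac12\gamma(G)\gamma(H)$. To reach the sharp inequality one must instead produce an injection from $D$ into a ground set of size $\gamma(G)\gamma(H)$ (equivalently, a fractional weighting of the $D_i$ of total mass $\gamma(G)$ per slab) in which the savings a single dominator generates are billed to at most one slab. I would attempt this by linearly ordering the slabs and routing each deficiency forward along the order, exploiting the rigidity of the domination partition $\{P_i\}$ to guarantee that forwarded demand meets unspent capacity. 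In full honesty, this is precisely the step that has resisted every attempt since Vizing posed the conjecture in 1968: closing the factor-of-$2$ gap in the partition argument for arbitrary $G$ and $H$ is tantamount to proving the conjecture itself, so I would not expect the routing scheme to close unconditionally, and the main obstacle is exactly the open problem it is trying to solve.
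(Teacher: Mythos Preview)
The statement you are attempting to prove is labeled in the paper as a \emph{conjecture}, not a theorem; the paper offers no proof of it, because none is known. Vizing's Conjecture has been open since 1968, and the paper merely quotes it as background before investigating its $(t,r)$ analog. So there is no ``paper's own proof'' to compare your proposal against.

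Your write-up is an accurate survey of the standard partition-and-projection framework and of why it stalls: the Clark--Suen charging argument really does lose a factor of~$2$, and your final paragraph candidly admits that the routing scheme you sketch is exactly the step nobody has been able to carry out. That is the correct assessment, but it also means what you have submitted is not a proof---it is a description of the known obstruction. If the assignment was to prove the stated result, the honest answer is that the result is conjectural and your outline does not (and, by your own account, is not expected to) close the gap.
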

    
While this conjecture remains unresolved, there have been many special cases of the conjecture proven in the past 50 years as detailed in the survey article by Bre\v{s}ar, Dorbec, Goddard, Hartnell, Henning, Klav\v{z}ar, and Rall \cite{BreDorGodHarHenKlaRal12}.
Since $(t,r)$ broadcast domination generalizes domination and distance domination, it seems natural to ask whether an analog of Vizing's conjecture holds for $(t,r)$ broadcast domination.

In Section \ref{sec:vizing} of this paper, we show that this is not the case when $r>1$ by providing a family of counterexamples. However, we conjecture that it holds for $r=1$, which is a strengthened form of Vizing's Conjecture.

We then move on from general Cartesian products, and focus on grid graphs of arbitrary dimensions $\ZZ^d$, which is the Cartesian product of $d$ infinite paths. In Section \ref{sec:ball}, we calculate $N_t(v)$ for any vertex $v$ in an $n$-dimensional hypercubic lattice. We give both combinatorial formulas and generating functions, and explore a connection with the Delannoy numbers. The $(m,n)$th Delannoy number counts the number of lattice paths to an integer point $(m, n)$ with only the steps $(1,0)$, $(0,1)$, and $(1,1)$. This connection was previously found by Zaitsev in 2017 \cite{Zait17}. In Theorem~\ref{thm:trivial} we find a surprising bijection between points in $\ZZ^n$ that are within distance $d$ of the origin and points in $\ZZ^d$ that are within distance $n$ of the origin.  

In Section \ref{sec:lbound}, we derive a lower bound on the minimum density of a $(t, r)$ broadcast dominating set of an infinite grid of any dimension. From that, we derive a lower bound on $(t,r)$ broadcast domination number of a finite grid.

We conclude by describing an algorithm which generates upper bounds on the density $(t, r)$ dominating set of a $2$ or $3$ dimensional infinite grid. The algorithm is similar to the one described in a recent paper by Drews, Harris, and Randolph \cite{drews2019optimal}. However, our algorithm works in more dimensions and we show it is more computationally efficient.

\subsection{Examples}
\label{subsec:ex}

Figure \ref{examplefig} illustrates examples of distinct sets of $(3, 2)$ broadcasts in a $5 \times 5$ grid. Black vertices have a reception of at least $3$ (since they are a distance $0$ away from a broadcast), red vertices have a reception of at least $2$ (since they have a distance $1$ from a broadcast), pink vertices have a reception of at least $2$ (since they have a distance $2$ from at least two broadcasts), gray vertices have a reception of 1 (since they have a distance 2 from exactly 1 broadcast), and white vertices have a reception of $0$ (since they have a distance of at least $3$ from all broadcasts). For a grid to be $(3, 2)$ dominated by its black vertices, there must be no gray or white vertices. Thus, Figure 1(A) is fully dominated while Figure 1(B) is not.
\begin{figure}[ht!]
\centering
\begin{subfigure}[b]{0.4\textwidth}
\begin{tikzpicture}
 \tikzstyle{ghost node}=[draw=none]
\tikzset{white node/.style={circle,draw=black, inner sep=2.5}}
\tikzset{red node/.style={circle,draw=black, fill=red, inner sep=2.5}}
\tikzset{black node/.style={circle,draw=black, fill=black, inner sep=2.5}}
\tikzset{pink node/.style={circle,draw=black, fill=pink, inner sep=2.5}}
 \foreach \x in {1, 2, 3,4,5}
    \foreach \y in {1, 2, 3,4,5} 
        \node [pink node] (\x\y) at (1*\x,1*\y){};
       \node[label={[xshift=0.15cm, yshift=0cm]1+1}] [pink node] (11) at (1*1,1*1){};
       \node[label={[xshift=0.15cm, yshift=0cm]1+1}] [pink node] (15) at (1*1,1*5){};
       \node[label={[xshift=0.15cm, yshift=0cm]1+1}] [pink node] (22) at (1*2,1*2){};
       \node[label={[xshift=0.15cm, yshift=0cm]1+1}] [pink node] (24) at (1*2,1*4){};
       \node[label={[xshift=0.0cm, yshift=0cm]1+1+1+1}] [pink node] (33) at (1*3,1*3){};
       \node[label={[xshift=0.15cm, yshift=0cm]1+1}] [pink node] (44) at (1*4,1*4){};
       \node[label={[xshift=0.15cm, yshift=0cm]1+1}] [pink node] (42) at (1*4,1*2){};
       \node[label={[xshift=0.15cm, yshift=0cm]1+1}] [pink node] (55) at (1*5,1*5){};
       \node[label={[xshift=0.15cm, yshift=0cm]1+1}] [pink node] (51) at (1*5,1*1){};
       \node[label={[xshift=0.15cm, yshift=0cm]3}] [black node] (13) at (1*1,1*3){};
       \node[label={[xshift=0.15cm, yshift=0cm]3}] [black node] (31) at (1*3,1*1){};
       \node[label={[xshift=0.15cm, yshift=0cm]3}] [black node] (35) at (1*3,1*5){};
       \node[label={[xshift=0.15cm, yshift=0cm]3}] [black node] (53) at (1*5,1*3){};
       \node[label={[xshift=0.15cm, yshift=0cm]2}] [red node] (12) at (1*1,1*2){};
       \node[label={[xshift=0.15cm, yshift=0cm]2}] [red node] (14) at (1*1,1*4){};
       \node[label={[xshift=-0.15cm, yshift=0cm]2}] [red node] (23) at (1*2,1*3){};
       \node[label={[xshift=0.15cm, yshift=0cm]2}] [red node] (21) at (1*2,1*1){};
       \node[label={[xshift=0.15cm, yshift=0cm]2}] [red node] (25) at (1*2,1*5){};
       \node[label={[xshift=0.15cm, yshift=0cm]2}] [red node] (32) at (1*3,1*2){};
       \node[label={[xshift=0.15cm, yshift=0cm]2}] [red node] (34) at (1*3,1*4){};
       \node[label={[xshift=0.15cm, yshift=0cm]2}] [red node] (41) at (1*4,1*1){};
       \node[label={[xshift=0.15cm, yshift=0cm]2}] [red node] (43) at (1*4,1*3){};
       \node[label={[xshift=0.15cm, yshift=0cm]2}] [red node] (45) at (1*4,1*5){};
       \node[label={[xshift=0.15cm, yshift=0cm]2}] [red node] (52) at (1*5,1*2){};
       \node[label={[xshift=0.15cm, yshift=0cm]2}] [red node] (54) at (1*5,1*4){};
       
  \foreach \x in {1,2,3,4,5}
    \foreach \y  [count=\yi from 2] in {1,2,3,4} 
      \path[] (\x\y)edge(\x\yi)(\y\x)edge(\yi\x);
\end{tikzpicture} 
\caption{}
\label{efa1}
\end{subfigure}
\centering
\begin{subfigure}[b]{0.4\textwidth}

\begin{tikzpicture}
 \tikzstyle{ghost node}=[draw=none]
\tikzset{white node/.style={circle,draw=black, fill = white, inner sep=2.5}}
\tikzset{red node/.style={circle,draw=black, fill=red, inner sep=2.5}}
\tikzset{black node/.style={circle,draw=black, fill=black, inner sep=2.5}}
\tikzset{pink node/.style={circle,draw=black, fill=pink, inner sep=2.5}}
\tikzset{gray node/.style={circle,draw=black, fill=gray, inner sep=2.5}}

 \foreach \x in {1, 2, 3,4,5}
    \foreach \y in {1, 2, 3,4,5} 
       \node [white node] (\x\y) at (1*\x,1*\y){};
      \node[label={[xshift=0.15cm, yshift=0cm]1+1}] [pink node] (13) at (1*1,1*3){};
      \node[label={[xshift=0.15cm, yshift=0cm]1+1}] [pink node] (31) at (1*3,1*1){};
      \node[label={[xshift=0.15cm, yshift=0cm]1+1}] [pink node] (35) at (1*3,1*5){};
      \node[label={[xshift=0.15cm, yshift=0cm]1+1}] [pink node] (53) at (1*5,1*3){};
       \node[label={[xshift=0.15cm, yshift=0cm]3}] [black node] (11) at (1*1,1*1){};
       
       \node[label={[xshift=0.15cm, yshift=0cm]3}] [black node] (51) at (1*5,1*1){};
       \node[label={[xshift=0.15cm, yshift=0cm]3}] [black node] (15) at (1*1,1*5){};
       \node[label={[xshift=0.15cm, yshift=0cm]3}] [black node] (55) at (1*5,1*5){};
       \node[label={[xshift=0.15cm, yshift=0cm]2}] [red node] (12) at (1*1,1*2){};
       \node[label={[xshift=0.15cm, yshift=0cm]2}] [red node] (14) at (1*1,1*4){};
       \node[label={[xshift=0.15cm, yshift=0cm]1}] [gray node] (22) at (1*2,1*2){};
       \node[label={[xshift=0.15cm, yshift=0cm]2}] [red node] (21) at (1*2,1*1){};
       \node[label={[xshift=0.15cm, yshift=0cm]2}] [red node] (25) at (1*2,1*5){};
       \node[label={[xshift=0.15cm, yshift=0cm]1}] [gray node] (42) at (1*4,1*2){};
       \node[label={[xshift=0.15cm, yshift=0cm]1}] [gray node] (44) at (1*4,1*4){};
       \node[label={[xshift=0.15cm, yshift=0cm]2}] [red node] (41) at (1*4,1*1){};
       \node[label={[xshift=0.15cm, yshift=0cm]1}] [gray node] (24) at (1*2,1*4){};
       \node[label={[xshift=0.15cm, yshift=0cm]2}] [red node] (45) at (1*4,1*5){};
       \node[label={[xshift=0.15cm, yshift=0cm]2}] [red node] (52) at (1*5,1*2){};
       \node[label={[xshift=0.15cm, yshift=0cm]2}] [red node] (54) at (1*5,1*4){};

  \foreach \x in {1,2,3,4,5}
    \foreach \y  [count=\yi from 2] in {1,2,3,4} 
      \path[] (\x\y)edge(\x\yi)(\y\x)edge(\yi\x);

\end{tikzpicture}
\caption{}
\label{efb1}
\end{subfigure}

\caption{A (3,2) broadcast dominating set of $P_5 \Box P_5$ and a (3,2) non-dominating broadcast set of $P_5 \Box P_5$.}
\label{examplefig}
\end{figure}
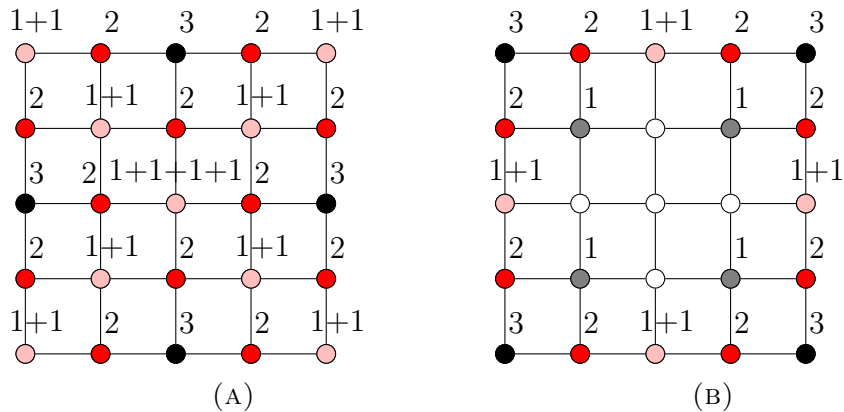

\section{The \texorpdfstring{$(t, r)$}{(t, r)} analog of Vizing's Conjecture}  \label{sec:vizing}
Since $(t,r)$ broadcast domination theory generalizes domination theory and distance domination theory, it is reasonable to ask whether Vizing's conjecture applies more generally to $(t,r)$ broadcast domination.

Take graphs $G = \{V_G, E_G\}, H = \{V_H, E_H\}$. Let $V$ be the set Cartesian product of $V_G$ and $V_H$. Let $E \subset V \times V$ contain  $((g_1, h_1), (g_2, h_2))$, with $g_1, g_2 \in V_G$ and $h_1, h_2 \in V_H$, if $g_1 = g_2$ and $h_1$ is adjacent to $h_2$ or $h_1 = h_2$ and $g_1$ is adjacent to $g_2$. We say that $\{V, E\}$ is the graph Cartesian product of $G$ and $H$, represented as $G \Box H$.
\begin{question} \label{question:1}
Let $\gamma_{t,r}(G)$ be the $(t,r)$ broadcast domination number of $G$. If $G$ and $H$ are finite graphs, is it true that
\begin{displaymath}
\gamma_{t,r}(G \Box H) \ge \gamma_{t,r}(G)\gamma_{t,r}(H)\end{displaymath}
 for all $t, r \in \ZZ$ such that $t \ge r \ge 1$?  
\end{question}

The main result of this section shows that this is not the case when $r>1$.
Before constructing a family of counterexamples to the 
$(t,r)$ broadcast domination analog of Vizing's conjecture, we first prove a result describing the $(t,r)$ broadcast domination numbers of cyclic graphs.

\begin{lemma}\label{lemma:counterexample}
Let $C_n$ denote the cycle graph with $n$ nodes. 
Then the $(t, r)$ broadcast domination number of $C_{2(t-r+1)}$ is 2.
\end{lemma}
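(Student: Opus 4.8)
The plan is to prove the two inequalities $\gamma_{t,r}(C_n) \ge 2$ and $\gamma_{t,r}(C_n) \le 2$ separately, where throughout $n := 2(t-r+1)$.

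For the lower bound I would use that the diameter of $C_n$ equals $n/2 = t-r+1$. So if a single broadcast is placed at a vertex $w$, the vertex $w'$ antipodal to $w$ lies at distance exactly $t-r+1$, giving it reception $t-(t-r+1) = r-1 < r$; hence $w'$ is undominated and no one-element set works. Together with the obvious fact that the empty set dominates no vertex of $C_n$ (which has $n\ge 2$ vertices), this yields $\gamma_{t,r}(C_n) \ge 2$.

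For the upper bound I would label the vertices $0,1,\dots,n-1$ in cyclic order and exhibit the size-two set $D = \{0,\; t-r+1\}$; since $t-r+1 = n/2$, the two broadcasts are antipodal. The one computation to carry out is the additive distance identity $dis(v,0) + dis(v,n/2) = n/2$, valid for every vertex $v$, which follows from a two-case split according to whether $v$ lies on the arc from $0$ to $n/2$ (where the two distances are $v$ and $n/2-v$) or on the arc from $n/2$ to $n-1$ (where they are $n-v$ and $v-n/2$); in both ranges the sum is $n/2$. Because $n/2 = t-r+1 \le t$, both broadcasts lie in the transmission neighborhood $N_t(v)$ of every $v$, so the reception at $v$ equals $(t-dis(v,0)) + (t-dis(v,n/2)) = 2t - n/2 = t+r-1 \ge r$, showing $D$ is a $(t,r)$ broadcast dominating set.

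I do not anticipate a genuine obstacle: the entire argument rests on the distance identity for two antipodal points of an even cycle, which is elementary. The only point deserving a word of care is the degenerate case $t=r$, where $n=2$ and $D$ is all of $V(C_2)$; the diameter is then $1 = n/2$, and both halves of the argument above go through unchanged.
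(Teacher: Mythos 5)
Your proposal is correct and follows essentially the same approach as the paper: both arguments show a single broadcast leaves the antipodal vertex with reception $r-1$, and both then dominate $C_n$ with the two antipodal broadcasts $\{0, n/2\}$. Your verification of the upper bound via the identity $dis(v,0)+dis(v,n/2)=n/2$ is somewhat more explicit than the paper's (which simply observes that the antipodal vertex was the only undominated one), but it is the same underlying argument.
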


\begin{proof} 
Let $n = 2(t-r + 1)$. Label the vertices of $C_n$ as $(0, 1, 2, \ldots , n-1)$, going clockwise. Suppose without loss of generality that a broadcast of strength $t$ is placed at vertex $(0)$.  Then vertex $(\frac{n}{2})$ has reception $r-1$, because it is distance $t - r + 1$ from vertex $(0)$, and so it is not yet dominated. 
Since vertex $(\frac{n}{2})$ is the only non-dominated vertex in $C_n$ if a broadcast is placed at vertex $(0)$,  every vertex can be dominated if another broadcast is added at $(\frac{n}{2})$. Hence $\gamma_{t,r}(C_{2(t-r+1)})=2.$
\end{proof} 

 The following result establishes a family of counterexamples to the inequality in  Question \ref{question:1}.
\begin{theorem}\label{thm:CC}
Let $C_n$ be a cycle graph with $n= 2(t-r+1)$. 
If $r \ge 2$, then the $(t, r)$ broadcast domination number of $C_n \Box C_n$ is 
$  \gamma_{t,r}(C_n \Box C_n) = 2$.
\end{theorem}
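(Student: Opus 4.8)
The plan is to show that two broadcasts suffice to dominate $C_n \Box C_n$ when $n = 2(t-r+1)$ and $r \ge 2$, since the lower bound $\gamma_{t,r}(C_n \Box C_n) \ge 2$ is immediate (a single broadcast of strength $t$ placed anywhere leaves the antipodal vertex in the same copy of $C_n$, hence the analog of the vertex $(\tfrac n2)$ from Lemma~\ref{lemma:counterexample}, with reception only $r-1 < r$; indeed when $r\ge 2$ no single vertex can dominate even one $C_n$ fiber). So the content is the construction. I would place broadcasts at the two vertices $p = (0,0)$ and $q = (\tfrac n2, \tfrac n2)$, i.e. at antipodal-in-both-coordinates positions, and verify that every vertex of $C_n \Box C_n$ has reception at least $r$.

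The key computation is the distance function on $C_n \Box C_n$: for a vertex $v = (a,b)$, the distance to $(0,0)$ is $d_{C_n}(a,0) + d_{C_n}(b,0)$, where $d_{C_n}(x,0) = \min(x, n-x)$. So I would split into the cases according to whether $v$ lies within transmission range $t$ of $p$, of $q$, of both, or of neither, and in each case bound $r(v)$ from below. The crucial sub-claim is that there is no vertex out of range of both broadcasts: since $d(v,p) + d(v,q) \le n = 2(t-r+1) \le 2t$ by the triangle-type identity on the torus (in fact $d(v,p)+d(v,q)$ equals exactly $n$ along each coordinate when measured the "long way," so it is at most $n$ overall... one has to be slightly careful, but the diameter of $C_n\Box C_n$ is $2\lfloor n/2\rfloor = n$, and $p,q$ are antipodal, so $d(v,p)+d(v,q) \le \mathrm{diam} = n$ is not quite automatic — rather one uses $d(v,p)+d(v,q) \le d(p,q) + 2\min(\cdot)$ type reasoning, or simply that in each coordinate $d_{C_n}(a,0)+d_{C_n}(a,\tfrac n2) = \tfrac n2$ exactly, giving $d(v,p)+d(v,q) = n$ for every $v$). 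That last identity is the linchpin: in a single cycle $C_n$ with $n$ even, any vertex's distances to two antipodal points sum to exactly $n/2 = t-r+1$. Hence for $v=(a,b)$, $d(v,p) + d(v,q) = n = 2(t-r+1)$, always.

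From that exact identity the reception bound follows by casework on whether $d(v,p) < t$, $= t$, or $> t$ (and symmetrically for $q$). If $d(v,p) \le t-1$ and $d(v,q)\le t-1$ then $r(v) = (t - d(v,p)) + (t-d(v,q)) = 2t - n = 2t - 2(t-r+1) = 2r - 2 \ge r$ precisely when $r \ge 2$ — this is exactly where the hypothesis $r \ge 2$ enters. If instead $d(v,p) \le t - r$, then $v$ alone gets $r(v) \ge t - (t-r) = r$ from $p$'s broadcast and we are done regardless of $q$; symmetric statement for $q$. So the only potentially bad range is $t - r < d(v,p) < t$ simultaneously with $t-r < d(v,q) < t$, and there $d(v,p)+d(v,q) > 2(t-r)$, consistent with the sum being $2(t-r+1)$, so $d(v,p), d(v,q) \in \{t-r+1, \dots\}$; combined with each being $< t$ and summing to $2(t-r+1)$ we get reception $2t - 2(t-r+1) = 2r-2 \ge r$. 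I should also handle the endpoint $d(v,p) = t$ (contributes $0$): then $d(v,q) = 2(t-r+1) - t = t - 2r + 2 \le t - r$ since $r \ge 2$, so $q$ alone dominates $v$. Every case closes.

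The main obstacle, and the only place requiring genuine care, is establishing the exact distance identity $d_{C_n}(a,0) + d_{C_n}(a,\tfrac n2) = \tfrac n2$ for all $a$ when $n$ is even, and propagating it additively to the Cartesian product — one must confirm the product metric really is the $\ell^1$-sum of the factor metrics (standard for Cartesian products of graphs) and check the boundary cases where $a$ is equidistant from $0$ and $\tfrac n2$. Once that identity is in hand, the reception bounds are a short finite case analysis, and the role of $r \ge 2$ is transparent: it is exactly the inequality $2r - 2 \ge r$. I would also remark why the construction fails for $r = 1$: there $2r - 2 = 0 < 1$, so vertices at distance exactly $t$ from both broadcasts (the "mid" vertices with $d(v,p) = d(v,q) = t-r+1 = t$... wait, when $r=1$, $n/2 = t$, and such vertices exist) receive nothing — consistent with the paper's expectation that Vizing's analog may hold when $r = 1$.
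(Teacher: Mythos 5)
Your proposal is correct and follows essentially the same route as the paper: place broadcasts at $(0,0)$ and $(\tfrac n2,\tfrac n2)$, use the identity that every vertex's distances to the two broadcasts sum to exactly $n$, compute the reception $2t-n=2r-2\ge r$, and rule out a single broadcast via the vertex at distance $t-r+1$. Your additional casework on whether a vertex lies within transmission range of each broadcast is more careful than the paper's one-line computation (which silently treats $t-d$ as a contribution even when $d\ge t$), but it does not change the substance of the argument.
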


\begin{proof}
Let $G = C_n \Box C_n$.  Place a broadcast at $(0, 0)$ and at $(\frac{n}{2}, \frac{n}{2})$. Note that this follows the same numbering used to prove Lemma \ref{lemma:counterexample}. Since $(0)$ and $(\frac{n}{2})$ have the maximum possible distance between them in $C_n$ and since distance in $G$ is the sum of the distance between the two pairs of coordinates, the vertices $(0, 0)$ and $(\frac{n}{2}, \frac{n}{2})$ must have the maximum possible distance in $G$. As a result, for any vertex $w$ in $G$, the sum of the distance from $w$ to $(0,0)$ and from $w$ to $(\frac{n}{2}, \frac{n}{2})$ is $n$. The reception at $w$ is $t-d((0,0),w)+t-d((\frac{n}{2}, \frac{n}{2}),w) = 2t - n = 2r-2$. Since we assume $r \ge 2$, the reception at $w$ is greater than $r$, so $\gamma_{t,r}(G) \le 2$.

Without loss of generality, assume $G$ can be dominated by one broadcast at $(0,0)$. The vertex $(\frac{n}{2},0)$ is a distance $t - r + 1$ from the broadcast, so it only receives $r - 1$ reception and is undominated. Thus the $(t,r)$ broadcast domination number of $G$ cannot be 1, so $\gamma_{t,r}(G) = 2$.
\end{proof}

Combining the results from Lemma~\ref{lemma:counterexample} and Theorem~\ref{thm:CC} shows that \begin{displaymath}\gamma_{t,r}(C_n \Box C_n) =2 <  \gamma_{t,r}(C_n)\gamma_{t,r}(C_n) = 4\end{displaymath} whenever $n= 2(t-r+1)$ and $r \ge 2$, and hence the analog of Vizing's Conjecture for $(t, r)$ broadcast domination posed in Question \ref{question:1} is false when $r\ge2$.  We illustrate this counterexample in Figure \ref{fig:C4} for the case $(t,r) = (3,2)$.
\begin{figure}[ht!]
\centering
\begin{subfigure}[b]{0.4\textwidth}
\begin{tikzpicture}
 \tikzstyle{white node}=[draw=none]
\tikzset{red node/.style={circle,draw=black, fill=red, inner sep=2.5}}
\tikzset{black node/.style={circle,draw=black, fill=black, inner sep=2.5}}
\tikzset{pink node/.style={circle,draw=black, fill=pink, inner sep=2.5}}
\tikzset{gray node/.style={circle,draw=black, fill=gray, inner sep=2.5}}

       \node[label={[xshift=0.15cm, yshift=0cm]3}] [black node] (11) at (1*1,1*1){};
       \node[label={[xshift=0.15cm, yshift=0cm]2}] [red node] (15) at (1*1,1*5){};
       \node[label={[xshift=0.15cm, yshift=0cm]1}] [gray node] (55) at (1*5,1*5){};
       \node[label={[xshift=0.15cm, yshift=0cm]2}] [red node] (51) at (1*5,1*1){};
       \path[] (11)edge(51)(51)edge(55)(55)edge(15)(15)edge(11);
\end{tikzpicture} 
\caption{}
\label{efa2}
\end{subfigure}
\centering
\begin{subfigure}[b]{0.4\textwidth}

\begin{tikzpicture}
 \tikzstyle{ghost node}=[draw=none]
\tikzset{white node/.style={circle,draw=black, fill = white, inner sep=2.5}}
\tikzset{red node/.style={circle,draw=black, fill=red, inner sep=2.5}}
\tikzset{black node/.style={circle,draw=black, fill=black, inner sep=2.5}}
\tikzset{pink node/.style={circle,draw=black, fill=pink, inner sep=2.5}}
\tikzset{gray node/.style={circle,draw=black, fill=gray, inner sep=2.5}}

 \foreach \x in {0,1, 2, 3,4,5}
    \foreach \y in {0,1, 2, 3,4,5} 
       \node [ghost node] (\x\y) at (1*\x,1*\y){};
      \node[label={[xshift=0.15cm, yshift=0cm]1+1}] [pink node] (13) at (1*1,1*3){};
      \node[label={[xshift=0.15cm, yshift=0cm]1+1}] [pink node] (31) at (1*3,1*1){};
       \node[label={[xshift=0.15cm, yshift=0cm]1+1}] [pink node] (11) at (1*1,1*1){};
       \node[label={[xshift=0.15cm, yshift=0cm]2}] [red node] (12) at (1*1,1*2){};
       \node[label={[xshift=0.15cm, yshift=0cm]2}] [red node] (14) at (1*1,1*4){};
       \node[label={[xshift=0.15cm, yshift=0cm]3}] [black node] (22) at (1*2,1*2){};
       \node[label={[xshift=0.15cm, yshift=0cm]2}] [red node] (21) at (1*2,1*1){};
       \node[label={[xshift=0.15cm, yshift=0cm]1+1}] [pink node] (42) at (1*4,1*2){};
       \node[label={[xshift=0.15cm, yshift=0cm]3}] [black node] (44) at (1*4,1*4){};
       \node[label={[xshift=0.15cm, yshift=0cm]2}] [red node] (41) at (1*4,1*1){};
       \node[label={[xshift=0.15cm, yshift=0cm]1+1}] [pink node] (24) at (1*2,1*4){};
       \node[label={[xshift=0.15cm, yshift=0cm]2}] [red node] (32) at (1*3,1*2){};
       \node[label={[xshift=0.15cm, yshift=0cm]2}] [red node] (23) at (1*2,1*3){};
       \node[label={[xshift=0.15cm, yshift=0cm]1+1}] [pink node] (33) at (1*3,1*3){};
       \node[label={[xshift=0.15cm, yshift=0cm]2}] [red node] (34) at (1*3,1*4){};
       \node[label={[xshift=0.15cm, yshift=0cm]2}] [red node] (43) at (1*4,1*3){};

  \foreach \x in {1,2,3,4}
    \foreach \y  [count=\yi from 1] in {0,1,2,3,4} 
      \path[] (\x\y)edge(\x\yi)(\y\x)edge(\yi\x);

\end{tikzpicture}
\caption{}
\label{efb2}
\end{subfigure}

     \caption{Under $(3,2)$ broadcast domination, $C_4$ is not dominated by 1 broadcast, so $\gamma_{3,2}(C_4) = 2$, and  $C_4 \Box C_4$ is dominated by 2 broadcasts, so $\gamma_{3,2}(C_4 \Box C_4) = 2$.}
    \label{fig:C4}
\end{figure}
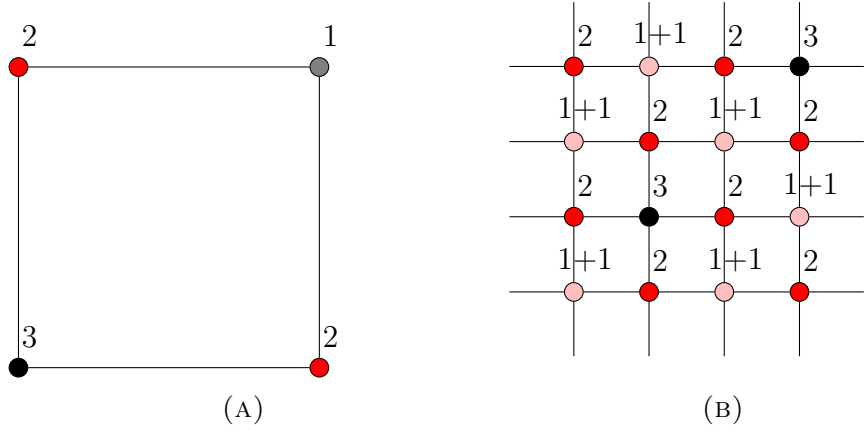

We have shown that the $(t,r)$ broadcast domination analog of Vizing's conjecture does not hold when $r>1$. However, we have not discovered any counterexamples that violate its bound by more that a factor of 2. In addition, none of our counterexamples apply for $(t,1)$ broadcast domination, which coincides with $(t-1)$-distance domination. As a result we state the following generalizations of Vizing's conjecture.

\begin{conjecture}
\label{conj:hviz}
Let $\gamma_{t,r}(G)$ be the $(t,r)$ broadcast domination number of $G$. For all finite graphs $G$ and $H$
\begin{displaymath}\gamma_{t,r}(G \Box H) \ge \frac{1}{2}\gamma_{t,r}(G)\gamma_{t,1}(H)\end{displaymath}
\end{conjecture}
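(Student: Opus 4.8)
The factor $\tfrac12$ in Conjecture~\ref{conj:hviz} is exactly the one appearing in the celebrated Clark--Suen bound $\gamma(G\Box H)\ge\tfrac12\gamma(G)\gamma(H)$ for ordinary domination, and Theorem~\ref{thm:CC} shows the constant cannot in general be pushed past $2$, so a Clark--Suen--type argument is the natural target. The plan is therefore to carry the Clark--Suen partition argument over to $(t,r)$-broadcast domination. The asymmetry between $\gamma_{t,r}(G)$ and $\gamma_{t,1}(H)$ is what that method should produce here: Clark--Suen partitions $V(H)$ into cells each ``reachable from one vertex,'' and in the broadcast setting the cheapest such cells are balls of radius $t-1$, of which $\gamma_{t,1}(H)$ are needed to cover $H$ (recall $(t,1)$-domination coincides with $(t-1)$-distance domination), while the broadcasts one then projects onto $G$ must still deliver reception $r$, which costs at least $\gamma_{t,r}(G)$.

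In detail, I would fix a minimum $(t-1)$-distance dominating set $\{u_1,\dots,u_k\}$ of $H$ with $k=\gamma_{t,1}(H)$ and partition $V(H)=V_1\sqcup\cdots\sqcup V_k$ by sending each vertex of $H$ to a nearest $u_i$, so that $V_i$ lies in the ball $B_H(u_i,t-1)$. Let $D$ be a minimum $(t,r)$-broadcast dominating set of $G\Box H$; for each $i$ put $S_i=D\cap(V(G)\times V_i)$ and $R_i=D\cap(V(G)\times\{u_i\})$, and note that $\sum_i|S_i|=|D|$ (the $V_i$ partition $V(H)$) while $\sum_i|R_i|\le|D|$ (the $u_i$ are distinct). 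The crux is a projection lemma: \emph{for each $i$, projecting the broadcasts of $S_i$ (together with a correction controlled by $|R_i|$) down onto $V(G)$ yields a $(t,r)$-broadcast dominating set of $G$ of size at most $|S_i|+|R_i|$.} Granting this, summing over $i$ gives $\gamma_{t,r}(G)\cdot k\le\sum_i\bigl(|S_i|+|R_i|\bigr)\le 2|D|$, i.e.\ $\gamma_{t,r}(G\Box H)\ge\tfrac12\gamma_{t,r}(G)\gamma_{t,1}(H)$.

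The main obstacle is the projection lemma, which is considerably more delicate than its analogue in Clark--Suen. Two features of broadcast domination compound the difficulty. First, a broadcast in $G\Box H$ has reach $t-1$ rather than $1$, so the reception arriving at a ``column center'' $(g,u_i)$ can come from a broadcast $(g',h')$ whose $H$-coordinate lies within distance $t-1$ of $u_i$ but in a different cell $V_j$; such a broadcast contributes to the dominating condition but is invisible to $S_i$. Second, reception is additive: unlike ordinary domination there is no single dominator to charge against, and the $r$ units required at $(g,u_i)$ may be assembled from many small contributions spread over several slabs and columns. A successful argument will therefore need a charging scheme that reroutes each stray broadcast $(g',h')$ to the slab of $h'$ (so it is absorbed by $S_j$ rather than lost) while keeping the charge per broadcast bounded by a constant --- this is the broadcast analogue of the combinatorial bookkeeping in the original Clark--Suen proof, and controlling that constant is where the real work lies. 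As warm-ups I would first settle $r=1$, where the statement is just a Clark--Suen bound for $(t-1)$-distance domination and the reception count degenerates to an incidence argument, and then test the general scheme on products of paths and cycles using the density estimates developed later in the paper. Alternatively, one could try to route the whole argument through a fractional relaxation of $(t,r)$-broadcast domination, where projections behave linearly and the factor $\tfrac12$ is recovered only when passing back to integral broadcasts.
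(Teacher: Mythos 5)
This statement is left as a conjecture in the paper --- no proof is given, only the remark that the path bounds, Theorem~\ref{thm:lowerbound}, and the algorithmic data are consistent with it --- so there is no argument of the paper's to compare yours against. Your proposal is a sensible research plan, and the Clark--Suen analogy is the right first instinct, but as written it is not a proof: the entire content of the conjecture is concentrated in the ``projection lemma'' that you explicitly leave open, and the difficulties you flag are real enough that the lemma is likely false in the form you state it. Concretely, the reception arriving at a column center $(g,u_i)$ can be assembled from broadcasts $(g',h')$ with $h'$ within distance $t-1$ of $u_i$ in $H$ but lying in a different cell $V_j$; since a single $h'$ can be within distance $t-1$ of many of the centers $u_1,\dots,u_k$ (the balls $B_H(u_i,t-1)$ can overlap heavily even though the cells $V_i$ do not), the natural charging scheme assigns each broadcast a charge that grows with the number of such centers, not a charge bounded by $2$. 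This is precisely where the analogy with Clark--Suen breaks: there each vertex of $H$ has a closed neighborhood meeting a controlled number of cells, whereas here the overlap multiplicity is unbounded in $t$. Until you either bound that multiplicity or find a different quantity to double-count, the inequality $\sum_i(|S_i|+|R_i|)\le 2|D|$ does not translate into a bound on $\gamma_{t,r}(G)\cdot k$.

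Two smaller points. First, your claim that Theorem~\ref{thm:CC} ``shows the constant cannot be pushed past $2$'' applies to the symmetric form $\gamma_{t,r}(G)\gamma_{t,r}(H)$; for the asymmetric right-hand side actually conjectured here, $\frac{1}{2}\gamma_{t,r}(C_n)\gamma_{t,1}(C_n)=\frac{1}{2}\cdot 2\cdot 1=1$ when $n=2(t-r+1)$ and $r\ge 2$, so the cycle counterexample does not certify tightness of the factor $\frac{1}{2}$ in this statement. Second, your proposed warm-up case $r=1$ is essentially Conjecture~\ref{conj:cgvizing}, i.e.\ a Vizing-type bound for $(t-1)$-distance domination without the factor $\frac{1}{2}$; a Clark--Suen argument there would only give the weaker constant, so even that base case is not ``just'' an incidence argument. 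None of this says your strategy is hopeless --- it is the natural one --- but the proposal as it stands identifies the obstruction rather than overcoming it, and the statement remains open both in the paper and after your sketch.
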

\begin{conjecture}  [Generalizing Vizing's Conjecture]
\label{conj:cgvizing}
Let $\gamma_{t,1}(G)$ be the $(t,1)$ broadcast domination (or $(t-1)$-distance domination) number of $G$. For all finite graph $G,H$
\begin{displaymath}\gamma_{t,1}(G \Box H) \ge \gamma_{t,1}(G)\gamma_{t,1}(H). \end{displaymath}
\end{conjecture}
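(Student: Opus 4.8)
\emph{Toward a proof.} The first step would be to reduce the statement to a purely metric one. Since $(t,1)$ broadcast domination of a graph $G$ coincides with $(t-1)$-distance domination, set $k=t-1$ and write $\gamma_k(G)$ for the least size of a set $S\subseteq V(G)$ with $d_G(v,S)\le k$ for all $v$; equivalently $\gamma_k(G)=\gamma(G^{k})$, the domination number of the $k$-th power. Conjecture~\ref{conj:cgvizing} then reads $\gamma_k(G\Box H)\ge\gamma_k(G)\,\gamma_k(H)$, and since $k=1$ is exactly Vizing's Conjecture, a complete proof is at least as hard as that open problem. The realistic plan is therefore to (a) pin down the cases that fall to a direct projection argument, (b) adapt the two classical Vizing attack techniques, due to Clark--Suen and to Barcalkin--German, to distance domination, obtaining a constant-factor bound in general (the $r=1$ case of Conjecture~\ref{conj:hviz}) together with the exact inequality for structured factors, and (c) isolate the one genuinely new obstruction.

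For (a): suppose $G$ has a minimum $k$-distance dominating set $\{c_1,\dots,c_m\}$, $m=\gamma_k(G)$, whose members are pairwise at distance more than $2k$, so the balls $A_j=\{v:d_G(v,c_j)\le k\}$ are disjoint and cover $V(G)$. For any $k$-distance dominating set $D$ of $G\Box H$, let $D^{(j)}=D\cap(A_j\times V(H))$ and let $Q_j$ be its projection onto the $H$-coordinate. Given $h\in V(H)$, the vertex $(c_j,h)$ is $k$-dominated by some $(g',h')\in D$ with $d_G(c_j,g')+d_H(h,h')\le k$; then $d_G(c_j,g')\le k$ forces $g'\in A_j$, so $(g',h')\in D^{(j)}$ and $d_H(h,h')\le k$. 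Hence each $Q_j$ is a $k$-distance dominating set of $H$, so $|D^{(j)}|\ge\gamma_k(H)$, and summing over the partition $D=\bigsqcup_j D^{(j)}$ gives $|D|\ge m\,\gamma_k(H)=\gamma_k(G)\gamma_k(H)$. This settles the conjecture whenever $G$ or $H$ has such a packing-type minimum dominating set, and in particular when $\gamma_k(G)=1$ (the case $m=1$).

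For (b): for general $G$ the balls $A_j$ must overlap, and an element of $D$ near $c_j$ need not lie in the block assigned to $c_j$ --- this is precisely the Vizing difficulty. First I would run a distance analogue of the Clark--Suen argument: fix a Meir--Moon-type partition $V(H)=\pi_1\sqcup\cdots\sqcup\pi_q$, $q=\gamma_k(H)$, with each $\pi_i$ inside a radius-$k$ ball about a center $c_i$; for a minimum $D$, put $D_i=D\cap(V(G)\times\pi_i)$, let $P_i$ be its $G$-projection, and let $X_i$ be the set of $g\in V(G)$ for which $(g,c_i)$ is $k$-dominated only from outside $V(G)\times\pi_i$. One checks that $P_i\cup X_i$ is a $k$-distance dominating set of $G$, so $|D_i|\ge\gamma_k(G)-|X_i|$, and summation gives $\gamma_k(G\Box H)\ge\gamma_k(G)\gamma_k(H)-\sum_i|X_i|$; it then remains to charge the elements of the $X_i$ to distinct dominators of $D$ with bounded multiplicity, producing $\gamma_k(G\Box H)\ge c_k\,\gamma_k(G)\gamma_k(H)$, ideally with $c_k=\tfrac{1}{2}$. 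In parallel, for $\gamma_k(G)\le 3$ and for $G$ a path, cycle, or tree, I would adapt the Barcalkin--German decomposition: partition $V(G)$ into $\gamma_k(G)$ blocks, each inside a radius-$k$ ball about a vertex and ``decomposable'' enough that the block projection of step (a) succeeds for every block at once, giving the exact product in these cases.

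The hard part, and the new phenomenon absent when $k=1$, is step (c): controlling $\sum_i|X_i|$, that is, the leakage of domination between cells. In $G\Box H$ the ball of radius $k$ about $(g,h)$ is $\bigcup_{i+j\le k}B^{G}_i(g)\times B^{H}_j(h)$, so a single dominator of $G\Box H$ can be external to \emph{every} cell $\pi_i$ whose center lies within $H$-distance $k$ of it, and for each such cell it can account for an entire $G$-ball of exceptional vertices. The dichotomy underpinning Clark--Suen --- a dominator either stays in its cell or lies in a single neighbouring cell --- thereby collapses, and a naive charging yields a constant $c_k$ that deteriorates as $k$ grows. Recovering a $k$-independent constant, let alone $\tfrac{1}{2}$, will likely require either refining the partition to cells of radius about $k/2$ (paying a factor in $q$ that must then be recovered) or a weighted, fractional-style charging keyed to how much of each dominator's reach is spent along $H$. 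And beneath all of this, the $k=1$ specialization being Vizing's Conjecture means no argument of this kind can settle the conjecture unconditionally.
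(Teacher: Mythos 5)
The statement you were asked about is not a theorem of the paper; it is stated there as Conjecture~\ref{conj:cgvizing}, with no proof offered --- the author only remarks that known bounds on paths, Theorem~\ref{thm:lowerbound}, and the algorithmic data of Section~\ref{sec:alg} are consistent with it. So there is no proof in the paper to compare yours against, and your submission is, by your own accurate description, a research program rather than a proof. Within that framing, the parts you actually carry out are sound. The reduction to $(t-1)$-distance domination is exactly the identification the paper itself makes, and your observation that the case $t=2$ \emph{is} Vizing's Conjecture is correct and worth stating explicitly: it means no complete elementary argument should be expected, and any claimed proof would be an enormous result. Your step (a) is a correct, self-contained partial result --- it is the distance-$k$ analogue of the classical fact that Vizing's inequality holds when one factor admits a minimum dominating set that is a $2$-packing, and the projection argument you give (disjoint balls $A_j$ partition $D$, and domination of the fiber $\{c_j\}\times V(H)$ forces each $D^{(j)}$ to project onto a $k$-distance dominating set of $H$) is complete and correct, including the $\gamma_k(G)=1$ case.

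The genuine gap is the one you yourself flag in (b) and (c): the Clark--Suen adaptation is only a skeleton, and the step that matters --- charging $\sum_i|X_i|$ against $|D|$ with multiplicity independent of $k$ --- is precisely where the $k>1$ geometry of the ball $\bigcup_{i+j\le k}B^G_i(g)\times B^H_j(h)$ breaks the dichotomy the classical argument relies on. Until that charging is carried out, you do not even have the constant-factor bound (the $r=1$ instance of Conjecture~\ref{conj:hviz}), let alone the conjecture itself. One further caution: the identity $\gamma_k(G)=\gamma(G^k)$ does not let you import partial results on Vizing's Conjecture wholesale, because $(G\Box H)^k$ is not $G^k\Box H^k$ (the $k$-th power of the product is a strong-product-like graph, not a Cartesian product), so the known classes of ``Vizing graphs'' do not automatically transfer. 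In short: nothing you wrote is wrong, step (a) is a publishable-style partial result, but the statement remains a conjecture after your proposal just as it does in the paper.
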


Previous bounds on $(t,r)$ domination of paths, Theorem 11, and the results from Algorithm 1 all provide further evidence of these conjectures.

\section{Balls in Lattices}
    
\label{sec:ball}

The later sections in this paper study $(t,r)$ broadcast domination on grid graphs, which are the Cartesian products of a finite number of paths. These results rely on the number of vertices a distance $d$ away from a given vertex in a grid. Since that has no closed expression, we were motivated to study that and related functions in this section.

 Let $\ZZ^n$ denote an infinite $n$-dimensional $\ZZ$-lattice.  Let $(x_1, \ldots , x_n)$ denote a lattice point in $\ZZ^n$. Define $S_n(d)$ to be the set of vertices with graph distance $d$ from the origin in $\ZZ^n$. An example is shown in Figure \ref{fig:2}. Similarly, define $B_n(d)$ to be the set of vertices with graph distance less than or equal to $d$ from the origin. This section focuses on counting the number of vertices in $S_n(d)$ and $B_n(d)$. 
 Not only are these formulas of interest in their own right, but, in Section \ref{sec:lbound}, we will use these combinatorial formulas in deriving bounds on the density of dominating sets.

\begin{figure}[ht!]
\centering
\begin{tikzpicture}[scale =.8]
 \tikzstyle{ghost node}=[draw=none]
\tikzset{white node/.style={circle,draw=black, inner sep=2.5}}
\tikzset{red node/.style={circle,draw=black, fill=red, inner sep=2.5}}
\tikzset{black node/.style={circle,draw=black, fill=black, inner sep=2.5}}
\tikzset{pink node/.style={circle,draw=black, fill=pink, inner sep=2.5}}

\foreach \x in {1, 2, 3,4,5,6,7}
    \foreach \y in {1, 2, 3,4,5,6,7} 
        \node [white node] (\x\y) at (1*\x,1*\y){};
\node [red node] (44) at (1*4,1*4){};
\node [black node] (47) at (1*4,1*7){};
\node [black node] (56) at (1*5,1*6){};
\node [black node] (65) at (1*6,1*5){};
\node [black node] (74) at (1*7,1*4){};
\node [black node] (63) at (1*6,1*3){};
\node [black node] (52) at (1*5,1*2){};
\node [black node] (41) at (1*4,1*1){};
\node [black node] (32) at (1*3,1*2){};
\node [black node] (23) at (1*2,1*3){};
\node [black node] (14) at (1*1,1*4){};
\node [black node] (25) at (1*2,1*5){};
       \node [black node] (36) at (1*3,1*6){};
\foreach \x in {1,2,3,4,5,6,7}
    \foreach \y  [count=\yi from 2] in {1,2,3,4,5,6} 
        \path[] (\x\y)edge(\x\yi)(\y\x)edge(\yi\x);

\end{tikzpicture}  

\caption{The red vertex is the origin and the black vertices are the elements of $S_2(3)$. The number of black vertices is $\sum_{i = 0}^{1}\binom{2}{i}2^{2-i}\binom{2}{1 - i} = 12$.}
  \label{fig:2}

\begin{center}
\renewcommand{\arraystretch}{1.2}   
\begin{tabular}{|c|c| } \hline
 $n$ & $|S_n(d)|$ \\ \hline
     1  & 2 \\
     2 & $4d$   \\
     3 & $4d^2  + 2$   \\
     4 & $\frac{8}{3}(d^3+2d)$  \\ 
     5 & $\frac{2}{3}(2d^4 + 10d^2 + 3)$\\
     6 & $\frac{4}{15}(2d^5 + 20d^3 + 23d)$\\
     7 & $\frac{2}{45}(4d^6 + 70d^4 + 196d^2 + 45)$\\
     \hline
\end{tabular}
\end{center} 

\end{figure}
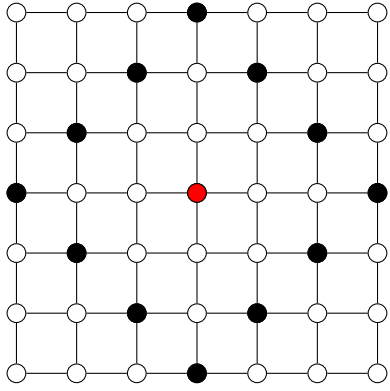

\begin{theorem}\label{theorem:shell}
[Theorem~1 \cite{Zait17}] When, $d \ge 1$, the number of vertices in $S_n(d)$ is given by 
\begin{displaymath}|S_n(d)| = \sum_{i = 0}^{n-1}\binom{n}{i}2^{n-i}\binom{d - 1}{n - i - 1}.\end{displaymath}
\end{theorem}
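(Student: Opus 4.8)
The plan is to count the points of $S_n(d)$ directly, stratifying them according to the number of nonzero coordinates. Recall that the graph distance in $\ZZ^n$ is the $\ell^1$ (taxicab) distance, so $S_n(d) = \{(x_1,\dots,x_n)\in\ZZ^n : \sum_{j=1}^n |x_j| = d\}$. First I would fix an integer $k$ with $1 \le k \le n$ and count those points whose support (the set of indices $j$ with $x_j \ne 0$) has size exactly $k$; here $k \ge 1$ is forced because $d \ge 1$ rules out the all-zero point.

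For a fixed support size $k$, the count factors into three independent choices: (i) choosing which $k$ of the $n$ coordinates are nonzero, which can be done in $\binom{n}{k}$ ways; (ii) choosing a sign for each of the $k$ nonzero coordinates, giving $2^k$ possibilities; and (iii) choosing the positive integers $|x_{j_1}|,\dots,|x_{j_k}| \ge 1$ summing to $d$, i.e.\ a composition of $d$ into $k$ positive parts, of which there are $\binom{d-1}{k-1}$ by a stars-and-bars argument. Multiplying these and summing over $k$ yields
$$|S_n(d)| = \sum_{k=1}^n \binom{n}{k}\, 2^k \binom{d-1}{k-1},$$
where the terms with $k > d$ vanish automatically since $\binom{d-1}{k-1}=0$ there, consistent with the fact that one cannot have more than $d$ nonzero coordinates when their absolute values sum to $d$.

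Finally I would reindex the sum by setting $i = n-k$, so that $k$ running from $1$ to $n$ corresponds to $i$ running from $n-1$ down to $0$; using $\binom{n}{k}=\binom{n}{n-i}=\binom{n}{i}$ this rewrites the sum as
$$|S_n(d)| = \sum_{i=0}^{n-1}\binom{n}{i}\, 2^{n-i}\binom{d-1}{n-i-1},$$
which is the claimed formula. There is no substantive obstacle beyond bookkeeping; the only points deserving a word of care are the hypothesis $d\ge 1$ (so that $x=0$ is excluded and every point has $k\ge 1$) and the standard count of compositions into positive parts.
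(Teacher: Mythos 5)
Your proof is correct and is essentially the paper's own argument: the paper also stratifies $S_n(d)$ by support, choosing which coordinates vanish, assigning signs to the nonzero ones, and counting compositions of $d$ into positive parts via stars and bars. The only difference is that the paper indexes by the number $i$ of zero coordinates rather than the number $k=n-i$ of nonzero ones, which is exactly the reindexing you perform at the end.
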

\begin{proof}
[\emph{This proof is independently derived, but similar to the proof given in \cite{Zait17}}]We partition \linebreak the set $S_n(d)$ into disjoint sets, indexed by how many coordinates are nonzero.  First, we count the vertices for which all of the coordinates are greater than 0. This the same as the problem of distributing $d$ identical items between $n$ containers such that each container has at least one item. There are $\binom{d - 1}{n - 1}$ such vertices. If some of the coordinates are negative instead of positive, the number remains the same. There are $2^n$ ways of assigning each coordinate to be negative or positive. So there are $2^n\binom{d - 1}{n - 1}$ vertices within distance $d$ away from the origin that have no coordinates that are $0$. 

Now assume $i$ of the coordinates are $0$. The sum of the magnitudes of the $n-i$ nonzero coordinates must be $d$. This creates $\binom{d - 1}{n - i - 1}$ possibilities. Then, there are $2^{n-i}$ ways to make each of the nonzero coordinates positive or negative. Finally, there are $\binom{n}{i}$ ways to choose which of the coordinates should be 0. Multiplying these numbers, we find that there are $\binom{n}{i}2^{n-i}\binom{d - 1}{n - i - 1}$ vertices in $S_n(d)$  that have exactly $i$ coordinates that are $0$. To get the total number of vertices in $S_n(d)$, we must sum over all the possible values of $i$. Since there must be some nonzero coordinate, $0 \leq i \leq n - 1$, we conclude that
$\displaystyle |S_n(d)| = \sum_{i = 0}^{n-1}\binom{n}{i}2^{n-i} \binom{d - 1}{n - i - 1} $.  \qedhere 
\end{proof}

In order to derive a generating function for $|S_n(d)|$ in Theorem~\ref{thm:S}, we first derive a generating function for $B_n(d)$. Let $B(x,y)$ be the multivariable generating function for $|B_n(d)|$, defined as 
\begin{displaymath}B(x,y) := \sum_{i=0}^{\infty}\sum_{j=0}^{\infty}|B_i(j)|x^iy^j.\end{displaymath} 
\begin{lemma}
Let $B_n(d)$ be the set of vertices a graph distance $d$ or less away from the origin in $\ZZ^n$. Then
\[|B_n(d)| = |B_{n-1}(d)| + |B_n(d-1)| + |B_{n-1}(d-1)|.\]
\label{lemma:recursion}
\end{lemma}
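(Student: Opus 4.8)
The plan is to prove the recursion combinatorially, by partitioning $B_n(d)$ according to the value of the last coordinate. Recall first that the graph distance from the origin in $\ZZ^n$ is the $\ell^1$ distance, so $B_n(d) = \{(x_1,\dots,x_n)\in\ZZ^n : |x_1|+\cdots+|x_n|\le d\}$; throughout I adopt the convention $|B_m(-1)| = 0$ so that the identity also makes sense at $d=0$, and I may assume $n\ge 1$.

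The points of $B_n(d)$ with $x_n = 0$ are in bijection with $B_{n-1}(d)$ via deletion of the last coordinate, contributing the term $|B_{n-1}(d)|$. For the points with $x_n \ge 1$, I would apply the shift $(x_1,\dots,x_{n-1},x_n)\mapsto(x_1,\dots,x_{n-1},x_n-1)$; from $|x_1|+\cdots+|x_{n-1}|+x_n \le d$ together with $x_n\ge 1$ one gets that the image has $\ell^1$-norm $|x_1|+\cdots+|x_{n-1}|+(x_n-1)\le d-1$ and nonnegative last coordinate, and this map is a bijection onto $\{y\in B_n(d-1): y_n\ge 0\}$. Symmetrically, the shift $x_n\mapsto x_n+1$ identifies the points of $B_n(d)$ with $x_n\le -1$ with $\{y\in B_n(d-1): y_n\le 0\}$. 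These two target sets cover all of $B_n(d-1)$ and overlap exactly in $\{y_n=0\}$, which has cardinality $|B_{n-1}(d-1)|$; hence inclusion--exclusion gives that the number of points of $B_n(d)$ with $x_n\ne 0$ equals $|B_n(d-1)|+|B_{n-1}(d-1)|$. Adding back the $x_n=0$ count yields $|B_n(d)| = |B_{n-1}(d)| + |B_n(d-1)| + |B_{n-1}(d-1)|$.

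There is no serious obstacle here --- the argument is elementary --- so the only care needed is bookkeeping: verifying that the two shift maps are genuine bijections onto the claimed half-spaces of $B_n(d-1)$, and handling the degenerate cases ($d=0$, and $n=1$ where $B_0(\cdot)$ is a single point) so that the stated identity holds without exception. An alternative would be to derive the recursion from the explicit shell count in Theorem~\ref{theorem:shell} by writing $|B_n(d)| = \sum_{e\le d}|S_n(e)|$ and manipulating binomial coefficients, but the bijective proof above is shorter and more transparent, and it is exactly the kind of ``last-coordinate'' decomposition that will also drive the generating-function computation for $B(x,y)$ afterwards.
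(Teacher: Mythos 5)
Your proof is correct, and it reaches the recursion by a genuinely different decomposition than the paper's. You slice $B_n(d)$ by the value of the last coordinate: the slice $x_n=0$ contributes $|B_{n-1}(d)|$, and the points with $x_n\neq 0$ are counted by shifting $x_n$ one step toward zero, which maps them bijectively onto the two half-spaces $\{y_n\ge 0\}$ and $\{y_n\le 0\}$ of $B_n(d-1)$; since these overlap exactly in the slice $\{y_n=0\}$, of size $|B_{n-1}(d-1)|$, inclusion--exclusion yields the remaining two terms. The paper instead first separates off the interior $B_n(d-1)$ (points at distance strictly less than $d$) and then splits the outer shell by the sign of the first coordinate, observing that on the shell the first coordinate is determined by the remaining ones, so the nonnegative part of the shell bijects with $B_{n-1}(d)$ and the negative part with $B_{n-1}(d-1)$. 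Both arguments are elementary and correct. The paper's version gives a clean three-way partition in which each block corresponds to exactly one term of the recursion, whereas yours trades that for an overlapping cover plus inclusion--exclusion; on the other hand, your shift maps are arguably easier to verify as bijections than the paper's ``determined coordinate'' step, and your explicit conventions for the degenerate cases ($|B_m(-1)|=0$ and $n\ge 1$) make the identity's range of validity clearer than in the paper, which leaves those boundary cases implicit.
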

\begin{proof}
Partition $B_n(d)$ into three sets. The set of points a distance less than $d$ away, the set of points a distance of exactly $d$ away with an $x_1$ value of at least 0, and the set of points a distance of exactly $d$ away from the origin with an $x_1$ value of less than 0. The first set is $B_n(d-1)$. 

If we have a point a distance exactly $d$ away from the origin in $\ZZ^n$ with coordinates $(a_1, \ldots, a_n)$, we know that
\begin{displaymath}d = \sum_{i = 1}^n|a_i| \text{ and thus} \end{displaymath}\begin{displaymath} |a_1| = d - \sum_{i=2}^n|a_i|.\end{displaymath}
Since all the points in the second set have an $x_1$ value of at least 0, this means that we can uniquely determine their $x_1$ values from their other coordinates. The second set then is
\begin{displaymath}\left\{(a_1, \ldots, a_n)\bigg|\sum_{i=2}^n|a_i| \leq d, a_1 = d - \sum_{i=2}^n|a_i|\right\}.\end{displaymath}
We also have that $B_{n-1}(d)$ is
\begin{displaymath}\left\{(a_2, \ldots, a_n) \in \ZZ^n\bigg|\sum_{i=2}^n|a_i| \leq d \right\}.\end{displaymath}
Thus, there exists a bijection from $(a_1, \ldots, a_n)$ in the second set to $(a_2, \ldots,  a_n)$ in $B_{n-1}(d)$. 

Since all the points in the third set have an $x_1$ value of less than 0, this means that we can uniquely determine their $x_1$ values from their other coordinates. We then have that the third set is 
\begin{displaymath}\left\{(a_1, \ldots, a_n)\in \ZZ^n\bigg|\sum_{i=2}^n|a_i| < d, a_1 = -d + \sum_{i=2}^n|a_i|\right\}.\end{displaymath}
We also have that $B_{n-1}(d-1)$ is
\begin{displaymath}\left\{(a_2, \ldots, a_n)\bigg|\sum_{i=2}^n|a_i| < d\right\}.\end{displaymath}
Thus, there exists a bijection from $(a_1, \ldots, a_{n-1})$ in the third set to $(a_2,  \ldots, a_n)$ in $B_{n-1}(d-1)$.

We now have the recursion
$|B_n(d)| = |B_{n-1}(d)| + |B_n(d-1)| + |B_{n-1}(d-1)|.$ \qedhere
\end{proof}
\begin{theorem}\label{theorem:ball}
If $B(x,y)$ is the multivariable generating function for $|B_n(d)|$, then
\begin{displaymath}B(x, y) = \frac{1}{1-x-y-xy}.\end{displaymath}
\end{theorem}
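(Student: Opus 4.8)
The plan is to convert the recursion of Lemma~\ref{lemma:recursion} into a closed identity for the formal power series $B(x,y)$, by multiplying the recursion by $x^n y^d$ and summing. Concretely, I would prove that $(1-x-y-xy)\,B(x,y) = 1$; since $1-x-y-xy$ has constant term $1$ it is invertible in $\ZZ[[x,y]]$, and this yields $B(x,y)=\frac{1}{1-x-y-xy}$ at once. (Everything here is formal, so no convergence issues arise.)

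First I would record the boundary values that the recursion does not cover. The lattice $\ZZ^0$ consists of the single point (the origin), so $|B_0(d)| = 1$ for every $d \ge 0$; and in $\ZZ^n$ the only vertex at distance $0$ from the origin is the origin itself, so $|B_n(0)| = 1$ for every $n \ge 0$. The identity $|B_n(d)| = |B_{n-1}(d)| + |B_n(d-1)| + |B_{n-1}(d-1)|$ of Lemma~\ref{lemma:recursion} then holds for all $n \ge 1$ and all $d \ge 1$. Writing $b_{n,d} := |B_n(d)|$ and expanding the product while reindexing the three subtracted sums gives
\[
(1-x-y-xy)\,B(x,y) = \sum_{n,d \ge 0} b_{n,d}\,x^n y^d \;-\; \sum_{\substack{n\ge 1\\ d\ge 0}} b_{n-1,d}\,x^n y^d \;-\; \sum_{\substack{n\ge 0\\ d\ge 1}} b_{n,d-1}\,x^n y^d \;-\; \sum_{\substack{n\ge 1\\ d\ge 1}} b_{n-1,d-1}\,x^n y^d .
\]

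Next I would compare coefficients of $x^n y^d$ on the right-hand side, splitting into cases. For $n \ge 1$ and $d \ge 1$ the coefficient is $b_{n,d} - b_{n-1,d} - b_{n,d-1} - b_{n-1,d-1}$, which vanishes by the recursion. For $n \ge 1$, $d = 0$ it is $b_{n,0} - b_{n-1,0} = 1 - 1 = 0$; for $n = 0$, $d \ge 1$ it is $b_{0,d} - b_{0,d-1} = 1 - 1 = 0$; and for $n = d = 0$ it is $b_{0,0} = 1$. Hence $(1-x-y-xy)\,B(x,y) = 1$, and the theorem follows. The computation is routine; the only place demanding care is the bookkeeping of the summation ranges after reindexing, so that the boundary coefficients (those with $n = 0$ or $d = 0$) are handled separately from the interior ones where Lemma~\ref{lemma:recursion} applies. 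I expect that to be the main, if modest, obstacle.
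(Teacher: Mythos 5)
Your proposal is correct and is essentially the paper's own argument: the paper likewise uses the boundary value $|B_0(0)|=1$ together with Lemma~\ref{lemma:recursion} to assert $B(x,y) = xB(x,y)+yB(x,y)+xyB(x,y)+1$ and solve for $B$. You have simply made explicit the coefficient-by-coefficient bookkeeping (including the $n=0$ and $d=0$ boundary cases) that the paper compresses into one line.
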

\begin{proof}

Since $|B_0(0)| = 1$, we can use Lemma \ref{lemma:recursion} to derive

\begin{displaymath}B(x,y) = xB(x, y) + yB(x, y) + xyB(x, y) + 1.\end{displaymath} 

Through algebraic manipulation, we can get 

\begin{displaymath}B(x, y) = \frac{1}{1-x-y-xy}. \label{eq:B(x,y)} \qedhere\end{displaymath} 
\end{proof}

From Theorem~\ref{theorem:ball}, we can derive the generating function for $|S_n(d)|$.
\begin{theorem} \label{thm:S}
Let $\displaystyle S(x,y) = \sum_{i=0}^{\infty}\sum_{j=0}^{\infty}|S_i(j)|x^iy^j$. Then 
\[S(x,y) = \frac{1-y}{1-x-y-xy}.\]
\end{theorem}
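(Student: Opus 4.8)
The idea is to express the shell counts in terms of the ball counts and then feed Theorem~\ref{theorem:ball} through. The only thing to check carefully is the behavior at $d=0$, so that the bookkeeping with indices is clean.

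First I would record the elementary identity relating shells to balls. For every $n\ge 0$ and every $d\ge 1$ we have a disjoint decomposition $B_n(d) = B_n(d-1)\sqcup S_n(d)$, hence $|S_n(d)| = |B_n(d)| - |B_n(d-1)|$. To make this hold uniformly for $d\ge 0$ as well, adopt the convention $B_n(-1)=\emptyset$, i.e. $|B_n(-1)|=0$; then since $S_n(0)=B_n(0)=\{\mathbf 0\}$ we still get $|S_n(0)| = 1 = |B_n(0)| - |B_n(-1)|$. So
\[
|S_n(d)| = |B_n(d)| - |B_n(d-1)| \qquad \text{for all } n\ge 0,\ d\ge 0.
\]

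Next I would substitute this into the definition of $S(x,y)$ and split the sum:
\[
S(x,y) = \sum_{i\ge 0}\sum_{j\ge 0}\bigl(|B_i(j)| - |B_i(j-1)|\bigr)x^i y^j
       = \sum_{i\ge 0}\sum_{j\ge 0}|B_i(j)|x^i y^j \;-\; \sum_{i\ge 0}\sum_{j\ge 0}|B_i(j-1)|x^i y^j.
\]
The first double sum is exactly $B(x,y)$. In the second, reindex with $k=j-1$: it becomes $y\sum_{i\ge 0}\sum_{k\ge -1}|B_i(k)|x^i y^{k}$, and the $k=-1$ term vanishes by our convention, so this equals $y\,B(x,y)$. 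Therefore $S(x,y) = (1-y)\,B(x,y)$, and plugging in $B(x,y) = \dfrac{1}{1-x-y-xy}$ from Theorem~\ref{theorem:ball} gives $S(x,y) = \dfrac{1-y}{1-x-y-xy}$, as claimed.

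\textbf{Main obstacle.} There is no serious obstacle; the entire content is the shell-minus-ball identity plus a one-line index shift. The only place to be attentive is the $d=0$ boundary (and likewise $n=0$): one must confirm that the convention $B_n(-1)=\emptyset$ is consistent with $|S_n(0)|=1$ and does not introduce spurious terms, which the check above does. If a referee prefers to avoid an ad hoc convention, one can instead treat the $j=0$ column of the sum separately (it contributes $\sum_{i}x^i = B(x,y)\big|_{y=0}$-type terms) and verify the final formula matches term by term, but the convention route is shorter and equally rigorous.
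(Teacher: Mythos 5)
Your proof is correct and follows essentially the same route as the paper: write $|S_n(d)| = |B_n(d)| - |B_n(d-1)|$, conclude $S(x,y) = (1-y)B(x,y)$, and substitute Theorem~\ref{theorem:ball}. Your extra care with the $d=0$ boundary (the convention $|B_n(-1)|=0$ reconciling $|S_n(0)|=1$) is a detail the paper glosses over, but it does not change the argument.
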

\begin{proof}
We know that $|S_n(d)|=|B_n(d)|-|B_n(d-1)|$. From that, we get this generating function
$\displaystyle S(x,y)=B(x,y)-yB(x,y)=  \frac{1-y}{1-x-y-xy}.  $
\end{proof}

We now look at how $B_n(d)$ and $S_n(d)$ vary with $n$ or $d$ fixed. To do so, we derive single variable generating functions, which can be done in the same manner. Consider $B_d(x) = \sum_{i=0}^\infty |B_i(d)|x^i$ and $B_n(y) = \sum_{i=0}^\infty |B_n(i)|y^i$. 
\begin{theorem} \label{theorem:genball}
If $\displaystyle B_d(x) = \sum_{i=0}^\infty |B_i(d)|x^i$ and $\displaystyle B_n(y) = \sum_{i=0}^\infty |B_n(i)|y^i$, then
\begin{displaymath}B_d(x) = \frac{(1+x)^d}{(1-x)^{d+1}}\text{ and}\end{displaymath}
\begin{displaymath}B_n(y) = \frac{(1+y)^n}{(1-y)^{n+1}}.\end{displaymath}
\end{theorem}
\begin{proof}
By Lemma \ref{lemma:recursion}, we have
\begin{displaymath}B_d(x) = xB_d(x) + B_{d-1}(x) + xB_{d-1}(x).\end{displaymath} 
We know that $B_0(x) = \frac{1}{1-x}$ and
$\displaystyle B_d(x) = \frac{1+x}{1-x}B_{d-1}(x)$, therefore by induction we have \begin{displaymath}B_d(x) = \frac{(1+x)^d}{(1-x)^{d+1}}.\end{displaymath}

Since $B(x,y)$ is symmetric over $x$ and $y$, we know that
$\displaystyle B_n(y) = \frac{(1+y)^n}{(1-y)^{n+1}}.$\qedhere
\end{proof}

We can perform similar derivations for $S_d(x)$ and $S_n(y)$.
\begin{theorem} \label{theorem:genshell}Let $\displaystyle S_d(x) = \sum_{i=0}^\infty |S_i(d)|x^i$ and $\displaystyle S_n(y) = \sum_{i=0}^\infty |S_n(i)|y^i$. Then, for $d > 0$ and $n \ge 0$,
\begin{displaymath}S_d(x) = \frac{2x(1+x)^{d-1}}{(1+x)^{d+1}}\end{displaymath} 
\begin{displaymath}\text{and } S_n(y) = \frac{(1+y)^n}{(1-y)^n}.\end{displaymath}
\end{theorem}
\begin{proof}If $d=0$, then $S_d(x) = \frac{1}{1-x}$. Otherwise
\begin{displaymath}S_d(x) = B_d(x) - B_{d-1}(x) = \frac{(1+x)^d}{(1-x)^{d+1}}-\frac{(1+x)^{d-1}}{(1-x)^{d}}= \frac{2x(1+x)^{d-1}}{(1+x)^{d+1}}.\end{displaymath}
Similarly, $\displaystyle S_n(y) = B_n(y)-xB_n(y) = (1-y)\frac{(1+y)^n}{(1-y)^{n+1}} = \frac{(1+y)^n}{(1-y)^n}.$ \qedhere

\end{proof}

From the formulas in Theorems~\ref{theorem:shell} and \ref{theorem:ball} and from generating functions in Theorems~\ref{theorem:genball} and \ref{theorem:genshell}, we found $|S_n(d)|$ and $|B_n(d)|$ correspond to Sequences \href{http://oeis.org/A265014}{OEIS A265014} and \href{http:/oeis.org/A008288}{OEIS A008288} in Sloane's Online Encyclopedia of Integer Sequences. Both sequences have applications to cellular automata, and $|B_n(d)|$ correspond to the well-studied Delannoy numbers. This result was previously independently derived by Zaitsev in 2017. The $(m,n)$th Delannoy number describes the number of paths from the origin to the integer point $(m,n)$, where, from each point $(i,j)$, $(i+1,j)$, $(i,j+1)$, and $(i+1,j+1)$ can be reached.

The fact that an $n$-dimensional ball of radius $d$ has the same number of points as a $d$-dimensional ball of radius $n$ is surprising, though an automatic consequence of the symmetry of the generating function $B(x,y)$, and suggests a simple bijection between the two sets. To identify such a bijection, we first introduce some notation. 
\begin{definition}
Let a \emph{tuple-sequence} be a list of signed tuples $[\pm (n_1, d_1), \pm(n_2, d_2) \ldots ]$, where $n_i$ and $d_i$ are positive integers for all $i$. Let the \emph{dimension sum} of a tuple-sequence be defined to be the sum of the first terms of the tuples, while the \emph{distance sum} is the sum of the second terms. 
Let $T_{n,d}$ denote the set of tuple-sequences with a dimension sum less than or equal to $n$ and a distance sum less than or equal to $d$.
\end{definition}

Every tuple-sequence describes a unique two-colored lattice path starting at $(0,0)$, with steps that increase both coordinates. Thus the set $T_{n,d}$ corresponds to the set of lattice paths that end at a point $(m,c)$ such that $m \le n$ and $c\le d$.

The following example demonstrates the bijections described above. 
\begin{example}
In $B_4(3) \xleftrightarrow{\phi} T_{4,3} \xleftrightarrow{\psi} T_{3,4} \xleftrightarrow{\phi^{-1}} B_3(4)$: 
\begin{displaymath}(2, 0,-1,0) \leftrightarrow (+(1,2), -(2,1)) \leftrightarrow (+(2,1),-(1,2)) \leftrightarrow (0, 1, -2)\end{displaymath}
\begin{displaymath}(-1, 0, 1, -1)\leftrightarrow(-(1,1), +(2,1),-(1,1))\leftrightarrow(-(1,1),+(1,2),-(1,1))\leftrightarrow(-1,2,-1)\end{displaymath}
\begin{displaymath}(0,0,0,0)\leftrightarrow()\leftrightarrow()\leftrightarrow(0,0,0)\end{displaymath}
In the first example, we consider the vertex with coordinates $(2,0,-1,0)$, which is inside $B_4(3)$. First, we convert it to a tuple-sequence. Since the first nonzero coordinate is 2 and is the first coordinate, we get the signed tuple $+(1,2)$. Since the second nonzero coordinate is -1, which is two coordinates later, we get the signed tuple $-(2, 1)$. Then we apply $\psi$, which flips all the tuples. Finally, we convert them back into a vertex in $B_3(4)$. Since the first tuple is $+(2,1)$, we get the that the second coordinate is 1. Since the next tuple is $-(1,2)$, we know that the next coordinate is -2. All others are 0. Thus, we get to $(0, 1, -2)$.

\end{example}

\begin{theorem} \label{thm:trivial}
There is a bijection between the sets $B_n(d)$, $B_d(n)$, $T_{n,d}$, and $T_{d,n}$.
\end{theorem}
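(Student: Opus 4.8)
The plan is to make precise the two maps $\phi$ and $\psi$ already illustrated in the example preceding the statement, check that each is a bijection, and then obtain the four-way correspondence by composition. Concretely, I will exhibit a bijection $\phi\colon B_n(d)\to T_{n,d}$ for every pair $(n,d)$, a bijection $\psi\colon T_{n,d}\to T_{d,n}$ which simply transposes each tuple, and then use these (together with the analogue of $\phi$ for the pair $(d,n)$) to link all four of $B_n(d)$, $T_{n,d}$, $T_{d,n}$, and $B_d(n)$.

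First I would pin down $\phi$. Given $(a_1,\dots,a_n)\in B_n(d)$, let $p_1<p_2<\cdots<p_k$ be the positions of its nonzero entries (with $k=0$ for the origin). Put $n_1=p_1$, $n_i=p_i-p_{i-1}$ for $2\le i\le k$, let $d_i=\abs{a_{p_i}}$, and let the sign of the $i$-th tuple be $\operatorname{sgn}(a_{p_i})$; define
$$\phi(a_1,\dots,a_n)=\bigl[\operatorname{sgn}(a_{p_1})(n_1,d_1),\ \operatorname{sgn}(a_{p_2})(n_2,d_2),\ \dots,\ \operatorname{sgn}(a_{p_k})(n_k,d_k)\bigr].$$
Each $n_i\ge 1$ and each $d_i\ge 1$, so this is a legitimate tuple-sequence; its dimension sum telescopes to $p_k\le n$ and its distance sum is $\sum_i\abs{a_{p_i}}=\sum_i\abs{a_i}\le d$, so $\phi$ lands in $T_{n,d}$. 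For the inverse, given $[\varepsilon_1(n_1,d_1),\dots,\varepsilon_k(n_k,d_k)]\in T_{n,d}$, form the point of $\ZZ^n$ whose entry in position $n_1+\cdots+n_i$ equals $\varepsilon_i d_i$ for each $i$ and is $0$ elsewhere; since each $n_j\ge 1$ the partial sums $n_1+\cdots+n_i$ are strictly increasing, and the largest is the dimension sum, which is $\le n$, so this is a well-defined element of $\ZZ^n$, and the sum of absolute values of its coordinates equals the distance sum, hence is $\le d$, so it lies in $B_n(d)$. These two constructions are visibly mutually inverse, so $\phi$ is a bijection.

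Next, define $\psi$ by sending $[\varepsilon_1(n_1,d_1),\dots,\varepsilon_k(n_k,d_k)]$ to $[\varepsilon_1(d_1,n_1),\dots,\varepsilon_k(d_k,n_k)]$: it keeps the signs and the order but transposes each tuple, which interchanges the dimension sum and the distance sum, so it maps $T_{n,d}$ into $T_{d,n}$; transposing twice is the identity, so $\psi$ is a bijection with inverse the corresponding map $T_{d,n}\to T_{n,d}$. Writing $\phi_{d,n}\colon B_d(n)\to T_{d,n}$ for the analogue of $\phi$ with $n$ and $d$ swapped, the chain
$$B_n(d)\ \xrightarrow{\ \phi\ }\ T_{n,d}\ \xrightarrow{\ \psi\ }\ T_{d,n}\ \xrightarrow{\ \phi_{d,n}^{-1}\ }\ B_d(n)$$
is then a composition of bijections, which gives bijections among all four sets, as claimed. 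There is no deep obstacle here: the only genuine content is the bookkeeping verifying that $\phi$ is well defined in both directions — that recording the first nonzero coordinate and then the successive gaps between nonzero coordinates is exactly the data $n_1,\dots,n_k$, that each $n_i,d_i\ge 1$ makes the output a legal tuple-sequence, and that the dimension and distance sums are correctly controlled — so I would write that step out carefully; the behaviour of $\psi$ and the final composition are then purely formal.
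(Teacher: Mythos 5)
Your proposal is correct and follows essentially the same route as the paper: the same encoding $\phi$ (recording gaps between nonzero coordinates, magnitudes, and signs), the same tuple-transposing involution $\psi$, and the same composition $\phi_{d,n}^{-1}\circ\psi\circ\phi$. Your write-up is in fact somewhat more careful than the paper's in making the inverse of $\phi$ explicit and verifying that the dimension and distance sums are exchanged by $\psi$.
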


\begin{proof} 
 Let $\phi$ be a mapping between points in $\ZZ^n$ and tuple-sequences. Consider point $p$ in $B_n(d)$ with coordinates $(x_1, \ldots , x_n)$ such that \begin{displaymath}\sum_{i=1}^n|x_i| \leq d.\end{displaymath} The map $\phi$ sends $p$ to the tuple-sequence $t$, such that each second term of a tuple of $t$ corresponds to the magnitude of a nonzero value of $p$, each first term corresponds to the distance between that value and the previous nonzero value (or the beginning of the sequence, if there is no first value), and the sign of that tuple corresponds to the sign of that value, i.e., 
\begin{align*}
&(\pm (n_1, d_1), \pm(n_2, d_2), \ldots)
\\
=&\phi((\textrm{[$n_1-1$ 0s]}, \pm d_1, \textrm{[$n_2-1$ 0s]}, \pm d_2, \ldots, d_k, 0, 0, 0, \ldots))
\end{align*}
The dimension sum of $t$ is the position of the last nonzero term of $p$. Since $p$ has $n$ terms, that must be less than or equal to $n$. The distance sum of $t$ is equal to the sum of the magnitudes of the values of $p$, which is the distance from the origin to $p$. Since $p$ is within a distance $d$ from the origin, the distance sum of $t$ must be less than or equal to $d$. Thus, $t$ is in $T_{n,d}$. For similar reasons, for any tuple-sequence $s$ in $T_{n,d}$, $\phi^{-1}(s)$ is in $B_n(d)$. Since $\phi$ is a one to one invertible function, it must be a bijection.

Let $\psi$ be the bijection between $T_{n,d}$ and $T_{d,n}$ defined by reversing the order of all the signed tuples in them. Since the composition of bijections is a bijection, we see that $\phi \circ \psi \circ \phi^{-1}$ defines a bijection between $B_n(d)$ and $B_d(n)$.
\end{proof}

\section{Lower Bound on Densities for Infinite Grids}
\label{sec:lbound}

In this section, we calculate lower bounds for the density of a ($t,r$) broadcast pattern which dominates an infinite grid or finite grid of any dimension. To begin, we note that since any reception provided to a vertex beyond $r$ is always wasted, it is often useful to ignore it. The \emph{unwasted reception} $r^*(d, v)$ provided to a vertex $v$ from a broadcast $d$ is defined to be $min(r(d, v), r)$. Define the coverage $c(d)$ of a broadcast $d$ to be $\sum_{v \in N_t(d)}r^*(d, v)$.
Given a transmission strength $t$, a desired reception $r$, and a graph $G$, we define $C_{t,r}(G)$ to be the maximum coverage provided by any vertex in $G$.  

An example is shown in Figure \ref{fig3}. In that figure, the broadcast shown in blue receives 4 reception, but since $r = 3$ the unwasted reception is only 3. The black vertices all receive $3$ reception. Both the blue and black vertices are dominated. The red and pink vertices all receive less than $r$ reception, so they receive an equal amount of unwasted reception and are not dominated. Since there are $12$ vertices receiving 1 reception, $8$ receiving 2, and 4 receiving 3 unwasted reception, the coverage provided by the center is $40$, so $C_{4,3}(\ZZ^2) = 40$.

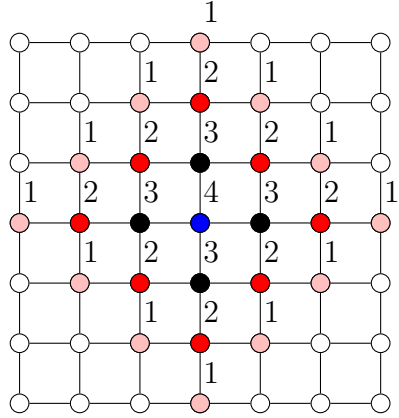
\begin{figure}[ht!]
\centering
\begin{tikzpicture}[scale =.8]
 \tikzstyle{ghost node}=[draw=none]
\tikzset{white node/.style={circle,draw=black, inner sep=2.5}}
\tikzset{red node/.style={circle,draw=black, fill=red, inner sep=2.5}}
\tikzset{black node/.style={circle,draw=black, fill=black, inner sep=2.5}}
\tikzset{pink node/.style={circle,draw=black, fill=pink, inner sep=2.5}}
\tikzset{gray node/.style={circle,draw=black, fill=gray, inner sep=2.5}}
\tikzset{blue node/.style={circle,draw=black, fill=blue, inner sep=2.5}}

 \foreach \x in {1, 2, 3,4,5,6,7}
    \foreach \y in {1, 2, 3,4,5,6,7} 
       \node [white node] (\x\y) at (1*\x,1*\y){};
       \node [label={[xshift=0.15cm, yshift=0cm]4}] [blue node] (44) at (1*4,1*4){};
       \node[label={[xshift=0.15cm, yshift=0cm]1}] [pink node] (47) at (1*4,1*7){};
       \node[label={[xshift=0.15cm, yshift=0cm]1}] [pink node] (56) at (1*5,1*6){};
       \node[label={[xshift=0.15cm, yshift=0cm]1}] [pink node] (65) at (1*6,1*5){};
       \node[label={[xshift=0.15cm, yshift=0cm]1}] [pink node] (74) at (1*7,1*4){};
       \node[label={[xshift=0.15cm, yshift=0cm]1}] [pink node] (63) at (1*6,1*3){};
       \node[label={[xshift=0.15cm, yshift=0cm]1}] [pink node] (52) at (1*5,1*2){};
       \node[label={[xshift=0.15cm, yshift=0cm]1}] [pink node] (41) at (1*4,1*1){};
       \node[label={[xshift=0.15cm, yshift=0cm]1}] [pink node] (32) at (1*3,1*2){};
       \node[label={[xshift=0.15cm, yshift=0cm]1}] [pink node] (23) at (1*2,1*3){};
       \node[label={[xshift=0.15cm, yshift=0cm]1}] [pink node] (14) at (1*1,1*4){};
       \node[label={[xshift=0.15cm, yshift=0cm]1}] [pink node] (25) at (1*2,1*5){};
       \node[label={[xshift=0.15cm, yshift=0cm]1}] [pink node] (36) at (1*3,1*6){};
       \node[label={[xshift=0.15cm, yshift=0cm]2}] [red node] (46) at (1*4, 1*6){};
       \node[label={[xshift=0.15cm, yshift=0cm]2}] [red node] (55) at (1*5, 1*5){};
       \node[label={[xshift=0.15cm, yshift=0cm]2}] [red node] (64) at (1*6, 1*4){};
       \node[label={[xshift=0.15cm, yshift=0cm]2}] [red node] (53) at (1*5, 1*3){};
       \node[label={[xshift=0.15cm, yshift=0cm]2}] [red node] (42) at (1*4, 1*2){};
       \node[label={[xshift=0.15cm, yshift=0cm]2}] [red node] (33) at (1*3, 1*3){};
       \node[label={[xshift=0.15cm, yshift=0cm]2}] [red node] (24) at (1*2, 1*4){};
       \node[label={[xshift=0.15cm, yshift=0cm]2}] [red node] (35) at (1*3, 1*5){};
       \node[label={[xshift=0.15cm, yshift=0cm]3}] [black node] (45) at (1*4, 1*5){};
       \node[label={[xshift=0.15cm, yshift=0cm]3}] [black node] (54) at (1*5, 1*4){};
       \node[label={[xshift=0.15cm, yshift=0cm]3}] [black node] (43) at (1*4, 1*3){};
       \node[label={[xshift=0.15cm, yshift=0cm]3}] [black node] (34) at (1*3, 1*4){};

  \foreach \x in {1,2,3,4,5,6,7}
    \foreach \y  [count=\yi from 2] in {1,2,3,4,5,6} 
      \path[] (\x\y)edge(\x\yi)(\y\x)edge(\yi\x);
\end{tikzpicture} 
\caption{A figure showing the $(4, 3)$ coverage provided by the center vertex in $\ZZ^2$. } \label{fig3}
\end{figure}

\begin{lemma}\label{lemma:coverage}
Let $C_{t,r}(\ZZ^n)$ be the amount of unwasted reception provided by a $(t,r)$ broadcast placed at the origin of $\ZZ^n$, and let $|S_n(d)|$ describe the number of points a distance exactly $d$ away from the origin in $\ZZ^n$. Then 
\begin{displaymath}C_{t,r}(\ZZ^n) = \sum_{d = t - r + 1}^{t - 1}(t - d)|S_n(d)| + r\sum_{d = 1}^{t - r}|S_n(d)| + r.\end{displaymath}
\end{lemma}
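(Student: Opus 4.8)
The plan is to compute $C_{t,r}(\ZZ^n)$ directly from its definition as $\sum_{v \in N_t(\mathbf 0)} r^*(\mathbf 0, v)$, by partitioning the transmission neighborhood $N_t(\mathbf 0)$ according to the distance $d = \operatorname{dis}(\mathbf 0, v)$ from the origin and grouping the resulting terms by whether the reception at $v$ is clipped by $r$ or not. Recall that a broadcast of strength $t$ at the origin gives a vertex at distance $d$ a reception of $r(\mathbf 0, v) = t - d$ for $0 \le d \le t-1$ (and $0$ for $d \ge t$, but those vertices lie outside $N_t(\mathbf 0)$ anyway), so the unwasted reception is $r^*(\mathbf 0, v) = \min(t - d, r)$.

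The key step is to split the range $0 \le d \le t-1$ at the threshold where $t - d$ first drops below $r$, namely at $d = t - r$. For $0 \le d \le t - r$ we have $t - d \ge r$, so $r^*(\mathbf 0, v) = r$ for every vertex at such a distance; this contributes $r \sum_{d=0}^{t-r} |S_n(d)|$. For $t - r + 1 \le d \le t - 1$ we have $1 \le t - d \le r - 1 < r$, so $r^*(\mathbf 0, v) = t - d$, contributing $\sum_{d = t-r+1}^{t-1} (t - d)|S_n(d)|$. Now I isolate the $d = 0$ term from the first sum: $|S_n(0)| = 1$ (the origin itself), which accounts for the lone additive $r$ in the claimed formula, leaving $r \sum_{d=1}^{t-r} |S_n(d)|$. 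Combining the three pieces gives exactly
\[
C_{t,r}(\ZZ^n) = \sum_{d = t-r+1}^{t-1}(t-d)|S_n(d)| + r\sum_{d=1}^{t-r}|S_n(d)| + r,
\]
as desired.

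There is essentially no hard obstacle here — the statement is a bookkeeping identity — but the one point that warrants care is the edge case $r = t$, where the middle sum $\sum_{d=1}^{t-r}|S_n(d)| = \sum_{d=1}^{0}(\cdots)$ is empty and the first sum runs over $d = 1, \ldots, t-1$; one should check that the formula still reads correctly (it does, collapsing to $\sum_{d=1}^{t-1}(t-d)|S_n(d)| + r$, which matches the direct computation since $t - d < r$ for all $d \ge 1$ when $r = t$). Similarly one should note that the formula in Theorem~\ref{theorem:shell} for $|S_n(d)|$ is only asserted for $d \ge 1$, so keeping the $d = 0$ term separate as the stray $+r$ is not merely cosmetic but necessary for the identity to be stated cleanly in terms of the known shell counts. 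I would present the proof as the three-way partition above followed by the one-line recombination.
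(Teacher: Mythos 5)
Your proposal is correct and follows essentially the same argument as the paper: partition $N_t(\mathbf 0)$ by distance from the origin, observe that vertices at distance $d \le t-r$ receive the clipped value $r$ while those at distance $t-r+1 \le d \le t-1$ receive $t-d$ unwasted, and account for the origin separately as the lone $+r$. Your added remarks on the $r=t$ edge case and on why the $d=0$ term must be kept separate are sensible refinements but do not change the approach.
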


\begin{proof}
Each broadcast provides $r$ unwasted reception to itself, and it also provides $r$ unwasted reception to each vertex a distance $t - r$ or less away. Thus it supplies a total unwasted reception to these vertices of 
\begin{displaymath}r + r\sum_{d = 1}^{t - r}|S_n(d)|.\end{displaymath}
For each vertex a distance $t - r < d < t$ away, the broadcast provides $t - d$ reception, none of which is wasted. The total reception provided to these vertices is
\begin{displaymath}\sum_{d = t - r + 1}^{t - 1}(t - d)|S_n(d)|.\end{displaymath}
Summing these, we get
\begin{align*}C_{t,r}(\ZZ^n) 
& = \sum_{d = t - r + 1}^{t - 1}(t - d)|S_n(d)| + r\sum_{d = 1}^{t - r}|S_n(d)| + r.
\end{align*} 
\end{proof}

Note that when $n$ is fixed, $C_{t,r}(\ZZ^n)$ becomes a degree $n+1$ polynomial in $t$ and $r$, as seen in Table \ref{tbl:Ctr}.

   \begin{table}[ht!]
        \centering
\begin{tabular}{|c|c| } \hline 
 $n$ & $C_{t,r}$ \\ \hline
    1 & $2tr - r^2$\\
     2 &  $\frac{2r^3}{3} - 2r^2 t + 2rt^2 + \frac{r}{3}$ \\
     3 &  $-\frac{r^4}{3} + \frac{4r^3t}{3} - 2r^2t^2 - \frac{2r^2}{3} + \frac{4rt^3}{3} + \frac{4rt}{3}$  \\
     4 & $\frac{2r^5}{15} - \frac{2r^4t}{3} + \frac{4r^3t^2}{3} + \frac{2r^3}{3} - \frac{4r^2t^3}{3} - 2r^2t + \frac{2rt^4}{3} + 2rt^2 + \frac{r}{5}$  \\ \hline
\end{tabular}    
\caption{Total unwasted reception provided by a $(t,r)$ broadcast in an $n$ dimensional grid}\label{tbl:Ctr}
\end{table} 

We are now ready to prove a lower bound on the $(t,r)$ broadcast domination of a finite grid $G$.

\begin{theorem} \label{thm:lowerbound}
Let $G$ be a finite $n$-dimensional grid graph with vertex set $V$. Then
\begin{displaymath}\gamma_{t,r}(G) \geq \frac{r|V|}{C_{t, r}(\ZZ^n)}.\end{displaymath}
\end{theorem}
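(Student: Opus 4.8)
The plan is to use a double-counting (discharging) argument on the total unwasted reception in $G$. Let $D$ be any $(t,r)$ broadcast dominating set of $G$, so $|D| = \gamma_{t,r}(G)$ for an optimal choice. I would count the quantity
\[
R := \sum_{v \in V} \sum_{d \in D} r^*(d,v),
\]
that is, the sum over all vertices of the total unwasted reception they receive, in two ways.

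First, a lower bound: since $D$ is a dominating set, every vertex $v \in V$ has true reception $r(v) \ge r$, hence its unwasted reception $\sum_{d \in D} r^*(d,v) \ge \min(r(v), r) = r$ (using that capping each summand individually at $r$ still leaves a sum at least $r$, because some single broadcast already supplies reception contributing toward the total — more carefully, $\sum_d r^*(d,v) \ge \min\bigl(\sum_d r(d,v),\, r\bigr) = r$). Summing over all $v$ gives $R \ge r|V|$.

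Second, an upper bound: exchange the order of summation to get $R = \sum_{d \in D} \sum_{v \in V} r^*(d,v) = \sum_{d \in D} \bigl(\text{coverage of } d \text{ in } G\bigr)$. For each broadcast $d$, its coverage in the finite grid $G$ is at most its coverage in the infinite grid $\ZZ^n$ when placed at the origin, since embedding a ball of $G$ into $\ZZ^n$ only adds possible recipients; thus the coverage of any single broadcast is at most $C_{t,r}(\ZZ^n)$ by Lemma~\ref{lemma:coverage} (which computes exactly this maximal coverage). Therefore $R \le |D| \cdot C_{t,r}(\ZZ^n)$. Combining the two bounds, $r|V| \le \gamma_{t,r}(G) \cdot C_{t,r}(\ZZ^n)$, which rearranges to the claimed inequality.

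\textbf{Main obstacle.} The only subtle point is justifying that a single broadcast's coverage in $G$ cannot exceed $C_{t,r}(\ZZ^n)$: one must argue that the multiset of distances $\{\,dis_G(d,v) : v \in N_t(d)\,\}$ in a finite grid is, vertex-count-wise at each distance, dominated by the corresponding counts $|S_n(d)|$ in $\ZZ^n$. This follows because a grid graph is an isometric subgraph of $\ZZ^n$ on a box, so distances are preserved and the set of vertices within distance $t$ of $d$ in $G$ injects distance-preservingly into $B_n(t)$ around the origin; hence for every $\delta$, the number of $v \in V$ with $dis_G(d,v) = \delta$ is at most $|S_n(\delta)|$. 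Since $r^*(d,\cdot)$ is a non-increasing function of distance that vanishes beyond $t-1$, replacing $G$-counts by the (larger) $\ZZ^n$-counts only increases the coverage sum, giving the bound $C_{t,r}(\ZZ^n)$. I would state this isometric-embedding fact explicitly and then the rest is the routine algebra above.
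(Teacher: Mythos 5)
Your proposal is correct and follows essentially the same double-counting argument as the paper: the paper's proof likewise bounds the total required unwasted reception below by $r|V|$ and the supply per broadcast above by $C_{t,r}(\ZZ^n)$, citing only ``$G$ is a subgraph of $\ZZ^n$'' for the latter. Your version simply makes explicit two details the paper glosses over — that capping each broadcast's contribution at $r$ still leaves every dominated vertex with at least $r$ unwasted reception, and that the isometric embedding of a finite grid into $\ZZ^n$ justifies the per-broadcast coverage bound.
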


\begin{proof}
Since $G$ is a subgraph of $\ZZ^n$, the maximum amount of coverage a vertex can provide is at most $C_{t,r}(\ZZ^n)$. The total (minimum) reception needed to dominate the graph is equal to the product of $r$ and the number of vertices in the graph. Since each vertex can provide at most $C_{t,r}(\ZZ^n)$ unwasted reception, we get
\begin{displaymath}\gamma_{t,r}(G)C_{t,r}(\ZZ^n) \geq r|V|.\end{displaymath}
Dividing both sides by $C_{t,r}(\ZZ^n)$ gives a lower bound for $\gamma_{t,r}(G)$.
\end{proof}

\section{Computer Implementation}  
\label{sec:alg}

To find more exact values for the density of $(t, r)$ dominating patterns on the infinite grid, we built a program to test various values. This program uses tower notation, first defined in \cite{drews2019optimal}. A tower set is denoted by $T(d, e)$, and is defined to be the set of vertices in $\mathbb{Z}^2$ with coordinates $(md+ne, n)$, with $m, n \in \mathbb{Z}$. By its definition $T(d, e)$ has density $\frac{1}{d}$. For various values of $(t, r)$, our program goes through various $T(d, e)$ and returns the one with the highest possible value of $d$. In the pseudocode below, $\text{IsBroadcast}(t, r, d, e)$ determines whether placing $(t, r)$ broadcasts at $T(d, e)$ dominates the grid, then MaxPotentialD($t, r$) finds the highest possible value of $d$ based on the formula from Lemma \ref{lemma:coverage}:  
\begin{displaymath}\textup{MaxPotentialD}(t,r) = \left  \lfloor \frac{1}{\gamma_{t,r}(G)} \right 
\rfloor \leq \frac{C_{t, r}(\ZZ^n)}{r|V|}.\end{displaymath}

\begin{algorithm}[ht!]
  \caption{Optimal (t, r) Broadcast Dominating Pattern}
\begin{algorithmic}
 \Procedure{MinDensity}{$t$, $r$}\;
 $d$ = MaxPotentialD($t, r$)\;
 \While{$d \geq 1$}{
  $e \leftarrow 1$\;
  \While{$e < d$}{
   \If{Rebroadcast($t, r, d, e$)}{
   return d\;
   }$e++$
   }$d--$}
  \EndProcedure
  \Procedure{IsBroadcast}{$t,r,d,e$}\;
  Array a = new Array[d]\;
  \For{$i = 0$; $i<t$; $i++$}{
  a[i]=a[-i]=t-i\;
  }
  \For{$i$;$t-r+1 \le i \le d/2$}{
  rcpt = a[i]\;
 \For{$j=1$; $j<t$; $j++$}{
 \If{a[(i+je) mod d] $>$ j}{
 rcpt += a[(i+je) mod d]-j\;
 }
 \If{a[(i-je) mod d] $>$ j}{
 rcpt += a[(i-je) mod d]-j\;
 }
 } 
 \If{rcpt $<$ r}{return false\;}
 }
 return true\;
 \EndProcedure
 \end{algorithmic}
 \end{algorithm}

Since the set of broadcasts is invariant under the transformations
\begin{displaymath}(x,y) \to (x+md+ne, y+n),\ m, n \in \ZZ\end{displaymath}
\begin{displaymath}(x, y) \to (-x, y)\end{displaymath}
only the vertices from $(0, 0)$ to $(\lfloor d/2 \rfloor, 0)$ need to be checked. Each increase of $y$ by 1 is equivalent to a shift by $e$ in the range $(0, 0)$ to $(d-1, 0)$. Thus, the algorithm returns the reciprocal of the minimum density of a $(t, r)$ broadcast dominating tower set.
 
\begin{example}
We illustrate the algorithm when  $(t, r) = (4, 2)$. The algorithm proceeds as follows: 

 For $(t, r)$ = $(4, 2)$, a single broadcast provides $38$ unwasted reception, so the maximum number of vertices that could be dominated by it is $19$. The algorithm iterates over variables $d$ and $e$, with $d$ starting at $19$ and going down to $1$, and $e$ starting at $1$ and going up to $d - 1$. The algorithm simulates an infinite grid with broadcasts placed at points described by $T(d, e)$. This is the set of points $(x, y)$ such that $x = ye$ mod $d$. For example, $T(18, 5)$ contains the points $(0, 0), (18, 0), (5, 1), (36, 0), (23, 1), (10, 2), \ldots$
    
     Since $T(d, e)$ is symmetric around each broadcast, only the $18$ vertices between the lattice points $(0, 0)$ and $(17, 0)$, inclusive, must be checked.
     We now add the receptions from only the broadcasts on the $x$-axis. The corresponding receptions are: \begin{displaymath}[4, 3, 2, 1, 0,0,0,0,0,0,0,0,0,0,0,1, 2, 3].\end{displaymath}
     Since each row is shifted by $5$ from the adjacent rows, to account for the other broadcasts we need to shift the reception pattern we got above by $5$, decrease all nonzero terms by $1$ since they are now further away, and add them to the receptions. So the reception received from the row above the $x$-axis is \begin{displaymath}[0,0,0,1,2,3,2,1,0,0,0,0,0,0,0,0,0,0].\end{displaymath} Table \ref{tbl:tower} below shows the reception received from each row of broadcasts. 

    \begin{table}[ht!]
    \label{tab:algex}
        \centering
        \begin{tabular}{c|c|c|c|c|c|c|c|c|c|c|c|c|c|c|c|c|c|c}
             &0&1&2&3&4&5&6&7&8&9&10&11&12&13&14&15&16&17 \\ \hline
             3&0&0&0&0&0&0&0&0&0&0&0&0&0&0&0&1&0&0 \\ 
             2&0&0&0&0&0&0&0&0&0&1&2&1&0&0&0&0&0&0 \\
             1&0&0&0&1&2&3&2&1&0&0&0&0&0&0&0&0&0&0 \\
             0&4&3&2&1&0&0&0&0&0&0&0&0&0&0&0&1&2&3 \\
             -2&0&0&0&0&0&0&0&1&2&1&0&0&0&0&0&0&0&0 \\
             -1&0&0&0&0&0&0&0&0&0&0&0&1&2&3&2&1&0&0 \\
             -3&0&0&0&1&0&0&0&0&0&0&0&0&0&0&0&0&0&0 \\ \hline
             Sum &4&3&2&3&2&3&2&2&2&2&2&2&2&3&2&3&2&3
        \end{tabular}
        \caption{The reception received by the vertex $(x, 0)$ from the broadcasts on row $y$, where $x$ is represented by the horizontal values and $y$ by the vertical. }\label{tbl:tower}
    \end{table}
    Since the sum of the receptions received from all rows is at least $2$ at every vertex, $T(18,5)$ dominates the grid under $(4,2)$ broadcast domination. Since all tower sets with lower densities have been shown to not dominate, this is the most efficient tower set for dominating the grid.
\end{example}

A similar algorithm is contained in \cite{drews2019optimal}. However, our algorithm not only works for 3-dimensional grids, it also runs faster (by a factor about about 415) when $t,r \le 10$. For instance, when implemented in Cocalc 
it takes our algorithm 1.39 seconds to calculate optimal broadcast densities for all $t,r \le 10$, while it takes the aforementioned algorithm 9 minutes and 36 seconds. The code used in Cocalc can be found \href{https://github.com/TomShlomi/TowerAlgorithmComparison}{here}.

Table \ref{table:1} contains the minimum densities of tower broadcasts for $(t,r) \leq (9,9)$.  A more complete set of data is found \href{https://docs.google.com/spreadsheets/d/1F2MNDzp_j10FXPlwU3vpCwrFzqyXJzAIyZdXN4pQPFk/edit?usp=sharing}{here}, and the code used is found \href{https://github.com/TomShlomi/t-r-Broadcast-Density/tree/Submission0}{here}. This expands on the results found by \cite{drews2019optimal}, and agrees with all values found there. The actual density seems to always be close to the minimum density, and appears to limit to 0 as $t$ grows. When $r = 1$, the density equals the minimum density. 

\begin{center}
\begin{table}[ht!]
\begin{tabular}{|c|c|c|c|c|c|c|c|c|c|c|} \hline
& $r = 1$ & $r = 2$ & $r = 3$ & $r = 4$ & $r = 5$ & $r = 6$ & $r = 7$ & $r = 8$ & $r = 9$ \\ \hline
$t=1$&1&&&&&&&&\\
$t=2$&5&3&&&&&&&\\
$t=3$&13&8&$5$&&&&&&\\
$t=4$&25&18&13&10&&&&&\\ 
$t=5$&41&32&25&18&14&&&&\\
$t= 6$&61&50&41&34&26&22&&&\\
$t=7$&85&72&61&50&42&36&29&&\\
$t=8$&113&98&85&74&62&54&43&39&\\
$t=9$&145&128&113&98&86&76&65&58&49\\  
     \hline
\end{tabular}
\caption{The reciprocal of the minimum density for small $t$ and $r$.} \label{table:1}
\end{table}
\end{center} 

\acknowledgements
The author would like to thank his mentor Dr. Erik Insko, who introduced him to this field and guided him throughout the research and writing of this paper. He would also like to thank Dr. Katie Johnson, Dr. Shaun Sullivan, and Dr. Pamela Harris, for their helpful comments and advice. Finally, he thanks an anonymous reviewer for their helpful comments that improved the quality of this manuscript.

\nocite{*}
\bibliographystyle{abbrvnat}
\bibliography{Paper}
\label{sec:biblio}

\end{document}